\documentclass{article}
\usepackage[round,authoryear]{natbib}
\usepackage[margin=1in]{geometry}
\usepackage[T1]{fontenc}
\usepackage{amsmath,amsthm,mathtools}
\usepackage{newtxtext,newtxmath}
\usepackage{microtype}
\usepackage{enumitem}
\usepackage{xcolor}

\usepackage[colorlinks=true,linkcolor=blue!45!black,
  citecolor=blue!45!black,urlcolor=blue!45!black]{hyperref}

\newtheorem{theorem}{Theorem}[section]
\newtheorem{lemma}[theorem]{Lemma}
\newtheorem{proposition}[theorem]{Proposition}
\newtheorem{corollary}[theorem]{Corollary}
\newtheoremstyle{italicremark}
  {\topsep}
  {\topsep}
  {\itshape}
  {}
  {\itshape}
  {.}
  {.5em}
  {}
\theoremstyle{italicremark}
\newtheorem{remark}[theorem]{Remark}

\newcommand{\R}{\mathbb{R}}
\newcommand{\cC}{\mathcal{C}}
\newcommand{\cF}{\mathcal{F}}
\newcommand{\cM}{\mathcal{M}}
\newcommand{\argmin}{\operatorname*{argmin}}
\newcommand{\dist}{\operatorname{dist}}

\setlist[itemize]{leftmargin=1.7em,itemsep=0.15em,topsep=0.3em}
\setlist[enumerate]{leftmargin=1.9em,itemsep=0.2em,topsep=0.3em}
\allowdisplaybreaks

\title{A lower bound for stepsize-based acceleration of gradient descent}
\author{Jianhao Ma\thanks{Department of Industrial Engineering, Tsinghua University. Email:
\texttt{jianhao@tsinghua.edu.cn}.}
\and Yuxin Chen\thanks{Department of Statistics and Data Science, Wharton School,
University of Pennsylvania. Email:
\texttt{yuxinc@wharton.upenn.edu}.}}
\date{\today}

\begin{document}
\maketitle

\begin{abstract}
Recent work has shown that, for smooth convex optimization, plain gradient descent  can be accelerated from its textbook convergence rate of $O(T^{-1})$ (where $T$ denotes the number of iterations) to $O\big(T^{-\log_2(1+\sqrt{2})}\big)$  using carefully designed stepsize schedules alone, without resorting to momentum or other algorithmic modifications. Despite this progress, however, little was known about lower bounds for such methods beyond the classical $\Omega(T^{-2})$ benchmark for general first-order methods. In this work, we present a new lower bound of $\Omega(T^{-1.9319})$ for the last-iterate convergence rate of gradient descent with predetermined nonnegative stepsize schedules.  This result provides rigorous evidence that stepsize schedules alone cannot accelerate plain GD to the optimal $O(T^{-2})$ convergence rate. The proof was developed by GPT-5.6 Sol Pro under the authors' guidance. 
\end{abstract}

\section{Introduction}

Consider the unconstrained smooth convex optimization problem
\begin{align}
\text{minimize}_{x\in\mathbb{R}^d}~~~ f(x),
\label{eq:min-convex-problem}
\end{align}
where $f:\mathbb{R}^d\to\mathbb{R}$ is a convex function whose gradient is $L$-Lipschitz.
 Gradient descent (GD), a canonical first-order method for solving \eqref{eq:min-convex-problem}, generates a sequence of iterates according to
\begin{align}
x_{t+1}
=
x_t-\eta_t\nabla f(x_t),
\qquad
0\leq t<T,
\label{eq:gd-intro}
\end{align}
where $x_0$ is the initialization, $T$ is the prescribed number of iterations, and $\{\eta_t\}_{0\leq t< T}$ is the stepsize schedule. With the standard constant stepsize $\eta_t=1/L$, the suboptimality of its last iterate decreases at the rate $O(T^{-1})$  \citep{bubeck2015convex}.  

\subsection{Motivation: stepsize-based acceleration of GD}

Faster convergence has traditionally required the use of momentum or other modifications to the GD update; most prominently, Nesterov's accelerated method attains the optimal $O(T^{-2})$ rate \citep{nesterov1983method}.  Surprisingly, a recent line of work has shown that plain GD can also be accelerated through carefully designed stepsize schedules alone --- e.g., schedules with occasional long steps --- even though such steps may temporarily worsen the objective value \citep{altschuler2025smoothconvex,grimmer2025objectivegap}.

The strongest known constructions are based on the
silver ratio $\rho_{\mathrm{sil}}=1+\sqrt2$.  Specifically, the stepsize schedules proposed by
\citet{altschuler2025smoothconvex,grimmer2025objectivegap} achieve the
last-iterate convergence rate of
\[
O\big(T^{-\log_2\rho_{\mathrm{sil}}}\big)
\qquad 
(\text{with }
\log_2\rho_{\mathrm{sil}} \approx 1.2715)
\]
whenever the {\em prescribed} number of iterations satisfies $T=2^k-1$ for some integer $k\ge1$.  
Subsequent concatenation and composition techniques extend the same convergence rate to arbitrary prescribed numbers of iterations \citep{zhang2024concatenation,grimmer2025composing}. 
These methods employ the standard GD update with predetermined nonnegative
stepsize schedules containing occasional long steps, chosen with knowledge of the total number of iterations $T$.  These developments naturally raise the following question:
\begin{quote}
\emph{How fast can plain GD converge when its stepsize schedule is chosen in advance with knowledge of the total number of iterations?}
\end{quote}
\citet{altschuler2025smoothconvex} conjectured that the optimal answer is given by the silver exponent $\log_2(1+\sqrt2)$, but this conjecture remains open.  

Resolving this question requires lower bounds tailored specifically to GD with predetermined stepsize schedules. 
The classical \(\Omega(T^{-2})\) lower bound for deterministic first-order methods applies to every method using at most \(T\) oracle calls and therefore does not distinguish plain GD from accelerated methods \citep{drori2016exact}.  Existing sharper lower bounds for GD with potentially large stepsizes either apply only to basic \(f\)-composable schedules \citep{grimmer2025composing}, or concern the ``anytime'' setting, where a single infinite stepsize schedule must be fixed in advance and perform well regardless of when the algorithm is stopped \citep{tsai2026lower}. In fact, existing lower bounds do not preclude plain GD with suitably designed predetermined stepsize schedules from attaining the optimal \(O(T^{-2})\) convergence rate.

\subsection{This paper}

\paragraph{Main contributions.} 
In this work, we make progress on this question for the class of
predetermined stepsize schedules designed for a prescribed horizon $T$, without
imposing any restrictions on the magnitudes and ordering of the nonnegative
stepsizes. 
Our main result establishes a new lower bound of 
$$
\Omega\big(T^{-p}\big)
\qquad \text{where  }p>p_\star=\sqrt{2+\sqrt3}\approx 1.9319
$$  
for the worst-case last-iterate convergence rate of gradient descent with arbitrary
predetermined nonnegative stepsize schedules. The proof constructs hard instances in
dimension at most $T+1$, and applies to schedules with zero or arbitrarily
large stepsizes, arbitrary ordering, and no descent or monotonicity
assumptions. To the best of our knowledge, this is the first rigorous evidence that adjusting predetermined stepsize schedules alone is insufficient to accelerate plain gradient descent to the optimal $O(T^{-2})$ convergence rate. The remaining gap between the best-known upper-bound
exponent, $\log_2(1+\sqrt2)\approx1.2715$, and our impossibility threshold
$p_\star$, remains unresolved.

\paragraph{Organization.}
The remainder of this paper is organized as follows. Section~\ref{sec:main-results} presents the main theorem, along with a high-level description of the proof strategy. 
Section~\ref{sec:related-work} reviews related work.
Sections~\ref{sec:geometric-construction},~\ref{sec:chronology}, and~\ref{sec:rank-analysis} provide the analysis (which
develops, respectively, the realization, the order-free matching
bound, and the final cutoff argument and scaling).   Section~\ref{sec:discussion}
concludes the paper with additional discussion, whereas the appendices contain 
the remaining technical analysis.

\paragraph{Notation.}
Throughout this paper, $d$ is a positive integer denoting the ambient dimension.  We equip $\R^d$
with its Euclidean inner product and norm $\|\cdot\|$.  We let $L>0$
denote the smoothness parameter, such that 
for all $x,y\in\R^d$,
\[
 \|\nabla f(x)-\nabla f(y)\|\le L\|x-y\|.
\]
Additionally, let
$\argmin f$
denote the set of minimizers of a function $f$. 
We also let $\operatorname{conv}(A)$ denote the convex hull of a set $A$.

\section{Main results}
\label{sec:main-results}

In this section, we formalize the setting and state our main lower bound. Throughout,
we consider gradient descent with predetermined nonnegative stepsize
schedules designed for a prescribed horizon $T$.

\subsection{Main theorem}

Let $\cF_{0,L}(\R^d)$ denote the class of differentiable convex functions
from $\R^d$ to $\R$ that admit at least one minimizer and have
$L$-Lipschitz continuous gradients. We consider the smooth convex optimization problem \eqref{eq:min-convex-problem}, where the objective function is assumed to obey $f\in\cF_{0,L}(\R^d)$. Given a prescribed horizon $T\ge1$ and an
initial point $x_0\in\R^d$, we consider running plain GD \eqref{eq:gd-intro} using a predetermined, nonnegative stepsize schedule
$\eta=(\eta_0,\ldots,\eta_{T-1})\in\R_{\ge0}^T$. 
The stepsize schedule is
fixed before the problem dimension, objective function, and minimizer are chosen. Our lower bound concerns the convergence rate of 
the last iterate $x_T$;  intermediate objective values need not decrease.

\begin{theorem}
\label{thm:main}
Let $p_\star \coloneqq \sqrt{2+\sqrt3}$.  For every
$p\in(p_\star,2)$, there exists a constant $c_p>0$, depending only on $p$,
such that the following holds.  For every integer $T\ge1$, every $L,R>0$,
and every predetermined nonnegative schedule
$\eta=(\eta_0,\ldots,\eta_{T-1})\in\R_{\ge0}^T$, there exists an integer
$d$ with $1\le d\le T+1$ such that, for every prescribed initial point
$x_0\in\R^d$, there exist a function $f\in\cF_{0,L}(\R^d)$ and a minimizer
$x_\star\in\argmin f$ satisfying $\|x_0-x_\star\|=R$ such that GD
(cf.~\eqref{eq:gd-intro}) initialized at $x_0$ with schedule $\eta$
satisfies
\[
 f(x_T)-f(x_\star)
 \ge c_p L R^2 (T+1)^{-p}.
\]
\end{theorem}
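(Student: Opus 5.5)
We begin by reducing to $L=1$ and $R=1$ by scaling: replacing $f$ by $LR^2 f((\cdot-x_0)/R)$ and $\eta_t$ by $\eta_t L$ shows the bound is invariant. Translation invariance lets us take $x_0=0$, so we must show that every schedule $\eta\in\R_{\ge0}^T$ admits a hard instance with $f(x_T)-f_\star\gtrsim (T+1)^{-p}$. The hard instances will be built in three layers, following the organization announced in the introduction: a geometric construction (realization), an order-free matching bound, and a cutoff/rank argument.

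\textbf{Step 1 (two basic obstructions).} There are two classical lower-bound mechanisms for a fixed stepsize $h=\eta_t$. The first is the one-dimensional Huber-type function $f(x)=\epsilon|x|-\epsilon^2/2$ near $x_\star$. It punishes long steps: a step with $h$ large overshoots, and the iterate lands far from the minimizer in a region of linear growth. The second is the quadratic $f(x)=\tfrac{\lambda}{2}x^2$ with small $\lambda$, for which GD multiplies the error by $\prod_t(1-\eta_t\lambda)$. This punishes a small total stepsize $\sum_t\eta_t$, giving $f(x_T)\gtrsim 1/\sum_t\eta_t$ whenever the polynomial $\prod_t(1-\eta_t\lambda)$ is not tiny at $\lambda\approx 1/\sum\eta_t$.

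A single obstruction cannot give more than $T^{-2}$: for the quadratic family alone, Chebyshev-type schedules reach $T^{-2}$. The idea is therefore to glue many such pieces in orthogonal coordinates. The dimension is at most $T+1$ because we use one coordinate per step plus one. Each coordinate then sees a convex piecewise function whose gradient activates only during a specific subset of times.

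\textbf{Step 2 (realization).} We construct $f$ as the smallest smooth convex interpolant of prescribed data $(x_t, g_t, f_t)$ for $t=0,\dots,T,\star$. Here $g_t=(x_t-x_{t+1})/\eta_t$ is forced by the GD recurrence. By the smooth convex interpolation theorem (Taylor--Hendrickx--Glineur), it suffices to check the finite family of inequalities
\[
f_i\ge f_j+\langle g_j,x_i-x_j\rangle+\tfrac12\|g_i-g_j\|^2 .
\]
We choose the $g_t$ mutually orthogonal, or supported on nested coordinate blocks, with magnitudes $\|g_t\|=a_t$ to be optimized. The interpolation inequalities then reduce to scalar inequalities coupling $a_t$, $\eta_t$ and the function values. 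This gives a family of feasible certificates parametrized by a sequence $a$.

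\textbf{Step 3 (order-free matching bound and cutoff).} The key difficulty is that the schedule can be arbitrarily ordered and may contain huge steps. We sort the stepsizes by rank: let $h_{(1)}\ge h_{(2)}\ge\cdots$. We then bound the achievable decrease by a quantity depending only on the multiset of steps, by matching each long step with a collection of shorter ones that must absorb its overshoot. We expect this to yield a recursion of the form
\[
h_{(k)}\lesssim \sum_{j>k}\phi(h_{(j)}),
\]
with a quadratic-type gain per level. The growth rate of the best sequence satisfying it is governed by a characteristic equation whose root gives the exponent $p_\star$. The value $p_\star^2=2+\sqrt3$ suggests a two-level recursion with characteristic polynomial $z^2-4z+1$.

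Finally, we choose a cutoff rank $K$. The steps ranked above $K$ are handled by the Huber pieces, and the rest by the quadratic piece. Optimizing $K$ gives $f(x_T)-f_\star\ge c_p(T+1)^{-p}$ for every $p>p_\star$; the slack $p>p_\star$ absorbs logarithmic and constant losses in the recursion.

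\textbf{Main obstacle.} We expect the main obstacle to be Step 3: proving an order-independent inequality on the schedule that is sharp enough to produce exactly the exponent $p_\star$. We would first try to derive it from a potential function summed over ranks, verified using the interpolation inequalities between pairs of times. This works only if the sign patterns of the pairwise inequalities can be controlled independently of the chronology.
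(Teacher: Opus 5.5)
Your outline has the right three-layer shape, but none of the layers is actually carried out, so there is a genuine gap rather than an alternative proof.

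\textbf{Step~2: no concrete instance.} You never fix the interpolation data, so no explicit functional of the schedule emerges that you could then bound from below. The Taylor--Hendrickx--Glineur route is legitimate in principle, but all of the content lies in choosing $(x_t,g_t,f_t)$. Note also that $g_t=(x_t-x_{t+1})/\eta_t$ is undefined when $\eta_t=0$, which the theorem allows.

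Your Step~1 heuristic points the wrong way. Suppose ``gluing pieces in orthogonal coordinates'' means a direct sum of one-dimensional Huber or quadratic pieces. Then it gains nothing, for two reasons:
\begin{itemize}
\item GD decouples, and the gap-to-$R^2$ ratio of a direct sum is at most the largest ratio among its pieces.
\item A separable coordinate whose gradient vanishes at some time never moves again, so it cannot ``activate'' later.
\end{itemize}
The paper's instance is genuinely non-separable. Selected long steps, kept in chronological order, cut the schedule into blocks. Along block $i$ the gradient is the constant $g_i=\frac{\lambda_i}{H_i}(e_i-\gamma_ie_{i+1})$, and the selected long step lands exactly on the next orthogonal anchor $X_{i+1}=\lambda_{i+1}e_{i+1}$. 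This is realized globally by the Moreau envelope of $\sigma_K$ with $K=\operatorname{conv}\{0,g_0,\ldots,g_m\}$, whose gradient is $\Pi_K$. The projection comparison that keeps $g_i$ active throughout block $i$ holds precisely when $\gamma_i^2\le\chi_i=\frac{y_{t_{i+1}}H_{i+1}}{U_i(H_i+H_{i+1})}$, and the terminal gap is then $\frac1{2H_m}\prod_i\chi_i$. This yields the explicit functional $\cC_T(h)$ that everything else bounds. It also makes your quadratic obstruction unnecessary, since the empty selection already contributes $\bigl(1+2\sum_th_t\bigr)^{-1}$ to $\cC_T(h)$.

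\textbf{Step~3: the order is never removed and the exponent is not derived.} As you acknowledge, you do not remove the dependence on chronology. The recursion $h_{(k)}\lesssim\sum_{j>k}\phi(h_{(j)})$ is posited rather than derived, and $z^2-4z+1$ is read off from the answer. The paper's mechanism is as follows.
\begin{itemize}
\item For the $q$ largest excesses in time order, only the total gap mass is controlled, via $\sum_iU_i+V=D_q$.
\item A budget-equalization inequality trades the individual $U_i$ for their average $D_q/q$. This turns the reciprocal chain value into a path product $\prod_i\psi(v_i,v_{i+1})$ with $\psi(u,v)=(u+v+uv)/2$, weights $v_i=2D_q/(qc_i)$, and one auxiliary terminal vertex.
\item The odd and even edges of this path form two matchings. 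Bounding each by the optimal matching product removes the order.
\item An outside-in pairing and a reverse Chebyshev inequality then give $\mu_q\le 2\zeta_q+2\zeta_q^2+o(1)$.
\end{itemize}
The matching pairs long steps with each other, not long steps with short ones as you suggest; short steps enter only through $D_q$.

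A pointwise constraint of the shape you propose is essentially what the paper obtains when the matching alternative fails, namely $qa_q<\vartheta^{-1}D_q$. On its own it only gives $D_k\lesssim B(r/k)^{\vartheta^{-1}}$, i.e.\ the exponent $1+\vartheta^{-1}=p^2>2$. That is weaker than the classical $T^{-2}$ bound. The decisive input is an averaged statement: the exact recursion for $\zeta_q$ together with the Lyapunov potential $\mathcal L_q=\nu_q(\zeta_q-\vartheta)/\{\vartheta(\nu_q+p+1)\}$ shows that the residual mass grows at average rate at most $p-1$. This gives $D_kk^{p-1}\le K_pBr^{p-1}$.

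The threshold $p_\star$ is where this is compatible with the matching requirement $2\vartheta+2\vartheta^2<1$ for $\vartheta=(p^2-1)^{-1}$. So $p_\star^2$ is indeed the larger root of $z^2-4z+1$, but for a reason your plan does not supply. Finally, a bounded-rank prefix lemma handles the top $Q$ excesses when the second alternative persists down to rank $Q$. Your ``Huber above the cutoff, quadratic below'' split has no counterpart in this argument and no argument of its own behind it.
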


\begin{remark}
Theorem~\ref{thm:main} establishes the lower bound separately for every
prescribed horizon and every predetermined stepsize schedule; the
adversarial instance may depend on both. Additionally, it does not establish the
endpoint lower bound $\Omega(T^{-p_\star})$, corresponding to the case
$p=p_\star$. Instead, the theorem applies only to exponents
$p>p_\star$.
\end{remark}

\paragraph{Comparison with lower bounds for anytime schedules.}
Our result should be contrasted with the concurrent anytime lower bound
of \citet{tsai2026lower}, which rules out a worst-case last-iterate
convergence rate of $o(T^{-4/3})$ for any single infinite sequence of
strictly positive stepsizes. That result requires a single stepsize
schedule to provide convergence guarantees uniformly over all stopping
times, whereas Theorem~\ref{thm:main} allows a different predetermined
stepsize schedule for each prescribed horizon. Therefore, the anytime
lower bound therein does not imply our lower bound.

\subsection{Proof strategy}
\label{subsec:proof-strategy}

In this subsection, we give a roadmap for the proof of Theorem~\ref{thm:main}; the full
proof is provided in
Sections~\ref{sec:geometric-construction}-\ref{sec:rank-analysis}.
After normalizing the stepsizes as $h_t=L\eta_t$, we assume without loss of
generality that $L=R=1$. We decompose each normalized stepsize into its capped
part and its excess part, and define
\[
 y_t \coloneqq (h_t-1)_+,\qquad
 B \coloneqq 1+\sum_{t=0}^{T-1}\min\{h_t,1\},\qquad
 r \coloneqq \bigl|\{t:y_t>0\}\bigr|.
\]
We call a step \emph{long} if $h_t>1$, equivalently if $y_t>0$. Thus $B$ is the
base mass contributed by the unit-capped schedule, and $r$ is the number of long
steps. The proof proceeds in three stages. First, we construct an explicit hard
instance tailored to selected long steps in the prescribed schedule. Second, we
derive a lower bound on the suboptimality gap that depends only on the magnitudes
of these long steps and is independent of their temporal order. Third, we combine
a rank cutoff argument with a Lyapunov estimate to obtain the desired convergence
lower bound.

\paragraph{An explicit hard instance from selected long steps.}
Choose any $m$ long steps in their original temporal order. They divide the
schedule into $m+1$ blocks. Section~\ref{sec:geometric-construction} associates
with these blocks positive scales $H_0,\ldots,H_m$ and transition factors
$\chi_0,\ldots,\chi_{m-1}$. Here $H_i$ records the displacement accumulated in
block $i$, while $\chi_i$ is the largest squared-amplitude ratio for which the
projection comparisons in the construction remain valid.
The construction places orthogonal anchors
\[
 X_i=\lambda_i e_i,\qquad
 \lambda_0=1,\qquad
 \lambda_{i+1}^2=\chi_i\lambda_i^2,
\]
where $e_0,\ldots,e_m$ is an orthonormal basis of $\R^{m+1}$. The block
gradients $g_i$ are chosen so that the trajectory moves along the ray
$X_i-\R_{\ge0}g_i$ during block $i$, and the selected long step at the end of
the block sends the iterate exactly to the next anchor:
\[
 X_i-H_i g_i=X_{i+1}\qquad (0\le i<m).
\]
The terminal gradient is chosen analogously along the final coordinate. Let
$K:=\operatorname{conv}\{0,g_0,\ldots,g_m\}$,
and define the hard instance as the Moreau envelope of the support function of
$K$:
\[
F(x):=\min_{z\in\R^{m+1}}
 \left\{\sigma_K(z)+\frac12\|x-z\|^2\right\},
 \qquad
 \sigma_K(z):=\max_{g\in K}\langle g,z\rangle .
\]
The key property of this choice is the Moreau identity
$\nabla F(x)=\Pi_K(x)$. Projection comparisons show that the vertex $g_i$
remains the projection throughout block $i$. Hence the unselected steps move
along the ray $X_i-\R_{\ge0}g_i$, while the selected long step lands exactly at
the next orthogonal anchor. At the terminal time, the construction yields
\[
 F(x_T)-F(0)
 =\frac{\lambda_m^2}{2H_m}
 =\frac1{2H_m}\prod_{i=0}^{m-1}\chi_i.
\]
Maximizing $H_m^{-1}\prod_i\chi_i$ over all admissible choices leads to a key functional $\cC_T(h)$. The empty selection contributes
$(1+2\sum_t h_t)^{-1}$; in particular, it already yields an $\Omega(T^{-1})$
gap when all steps are short. The remaining task is to show that long steps
cannot make $\cC_T(h)$ too small.

\paragraph{Removing temporal order by two matchings.}
The main obstacle is that each factor $\chi_i$ couples two neighboring blocks,
so a chain value depends on the temporal order of the long steps, not only on
their sizes. To eliminate this temporal dependence, rank the positive excesses as
$a_1\ge\cdots\ge a_r>0$ and define the residual schedule mass
$D_q:=B+\sum_{s=q+1}^r a_s$.
For a fixed $2\le q\le r$, select the $q$ largest excesses while retaining their
original temporal order. A budget-equalization argument converts the reciprocal
of the corresponding chain value into the edge product of a path whose vertex
weights are normalized reciprocals of the selected excesses. The odd and even
edges of this path form two matchings. Replacing each by the corresponding
optimal matching value removes the temporal order completely, at the cost of
upper-bounding the reciprocal chain value. Therefore, after inversion, we obtain
a lower bound on $\cC_T(h)$. If $\mu_q$ denotes the resulting geometric mean
cost per edge, then
\[
 \cC_T(h)
 \ge \frac{q}{2D_q(q-1)\mu_q^q}.
\]
Thus a bound of the form $\mu_q<\rho<1$ immediately creates the favorable
factor $\rho^{-q}$ in the lower bound.

The single statistic
\[
 \zeta_q:=\frac{D_q}{q^2}\sum_{s=1}^q \frac1{a_s}
\]
controls the total normalized reciprocal weight of the selected excesses, and
therefore controls the matching cost $\mu_q$. A total-weight estimate for the
two matchings shows that, when $\zeta_q\le\vartheta$ and $q$ is large, $\mu_q$
is bounded above by $2\vartheta+2\vartheta^2$ plus an error that vanishes as $q\to\infty$. This is the only
property of the matching problems needed in the final rank argument.

\paragraph{Rank cutoff and mass growth.}
Fix $p\in(p_\star,2)$ and set $\vartheta:=(p^2-1)^{-1}$, which satisfies $2\vartheta+2\vartheta^2<1$. Thus, for all sufficiently
large ranks, one can choose a fixed $\rho<1$ such that
$\zeta_q\le\vartheta$ forces $\mu_q<\rho$. This produces a useful dichotomy.
If $\mu_q<\rho$, the matching bound already gives a large value of
$\cC_T(h)$ through the factor $\rho^{-q}$. Otherwise $\zeta_q>\vartheta$.
In this second regime, define
\[
 \nu_q:=\frac{qa_q}{D_q}.
\]
Since $D_{q-1}/D_q=1+\nu_q/q$, the quantity $\nu_q$ measures how quickly the
residual schedule mass grows as the rank cutoff decreases.

The pointwise bound on $\nu_q$ is not sufficient by itself. The decisive
additional input is the exact adjacent-rank recursion
\[
 \zeta_{q+1}
 =\frac{q^2\zeta_q}
 {(q+1)(q+1+\nu_{q+1})}
 +\frac1{(q+1)\nu_{q+1}},
 \qquad 1\le q<r.
\]
If $\nu_q$ were roughly constant and equal to $\nu$, this recursion would drive
$\zeta_q$ toward $1/(\nu(\nu+2))$. Maintaining $\zeta_q>\vartheta$ therefore
limits the sustainable growth rate to $\nu\le p-1$.

Section~\ref{sec:rank-analysis} makes this heuristic argument rigorous with the aid of a Lyapunov
potential. Telescoping its one-step drift and using the mass-ratio identity
yield a constant $K_p$, depending only on $p$, such that
\[
 D_k k^{p-1}\le K_p B r^{p-1}
\]
along every interval on which the second alternative persists.

The proof now scans the ranks until either the favorable matching alternative
occurs or the Lyapunov estimate controls all but a bounded number of
excesses; in the latter case, the remaining excesses are handled directly. In both cases,
\[
 \cC_T(h)
 \ge\frac{c_p}{B(r+1)^{p-1}}
 \ge c_p(T+1)^{-p}.
\]

\paragraph{The threshold exponent.}
The matching and mass-growth arguments impose competing requirements on
$\vartheta$: the Lyapunov analysis gives
$p=\sqrt{1+\vartheta^{-1}}$, whereas the matching estimate requires
\[
 2\vartheta+2\vartheta^2<1
 \quad\Longleftrightarrow\quad
 \vartheta<\frac{\sqrt3-1}{2}
 \quad\Longleftrightarrow\quad
 p>\sqrt{2+\sqrt3}=p_\star.
\]
At $p=p_\star$, the interval $(2\vartheta+2\vartheta^2,1)$ from which $\rho$
must be chosen collapses. This explains why Theorem~\ref{thm:main} establishes
$\Omega(T^{-p})$ for every $p\in(p_\star,2)$, but is not yet able to reach the endpoint
$p=p_\star$.

\section{Related work}
\label{sec:related-work}

\paragraph{Classical bounds and horizon-dependent analysis.}
For smooth convex optimization, classical worst-case theory has established an
$O(LR^2/T)$ convergence guarantee for plain GD with the
constant stepsize $\eta=1/L$
\citep{bubeck2015convex,beck2017first}, while accelerated first-order
methods achieve the optimal $O(LR^2/T^2)$ rate
\citep{nesterov1983method}.  In dimensions $d\ge T+1$, the
exact information-based minimax risk also scales on the order of $LR^2/T^2$ \citep{drori2016exact} and is
attained by the Optimized Gradient Method \citep{drori2014performance,kim2016optimized}. Since this
is an oracle lower bound---that is, it applies to the entire class of
first-order methods that access the objective only through a first-order
oracle---it characterizes the fundamental limits of first-order methods,
rather than those of plain GD itself.  A complementary line
of work studies lower bounds for restricted classes of
first-order methods. For instance, 
\citet{arjevani2016iteration} showed that restricting GD to 
time-invariant stepsizes cannot improve upon the classical $O(T^{-1})$
convergence rate, whereas our result allows arbitrary predetermined
stepsize schedules.

Another related line of work studies the exact worst-case
performance of first-order methods through the
performance estimation problem (PEP) framework, introduced by
\citet{drori2014performance}. Building on smooth interpolation,
\citet{taylor2017smooth} showed that, when $d\ge T+2$, the PEP can be
formulated as a finite-dimensional semidefinite program (SDP); an additional
rank constraint is required in smaller dimensions. 
For GD on $L$-smooth convex objectives,
\citet{daccache2019performance} derived the exact worst-case two-step objective-gap formula
for $(L\eta_0,L\eta_1)\in[0,1]^2$ and conjectured a formula on $[0,2]^2$,
together with partial extensions to longer schedules. \citet{diego2022worst}
subsequently constructed explicit lower-bound instances attaining conjectured
expressions in several two-, three-, and longer-step regimes.
Overall, exact worst-case characterizations of GD with variable
stepsize schedules remain available only in a limited number of
special cases.  
Branch-and-bound and local PEP methods have also been proposed for
computing improved stepsize schedules for prescribed horizons
\citep{das2024branch,kamri2025numerical}; note, however, that the local method does not
certify global optimality, and neither approach yields an asymptotic characterization of the
achievable convergence exponent.

Additionally, \citet{kim2024proof} derived the exact worst-case formula for
the terminal objective value of constant-step GD on smooth convex and
strongly convex functions when $\eta\in(0,2/L)$.   \citet{rotaru2026exact} established the corresponding exact formula for
the terminal gradient norm.  For possibly nonconvex $L$-smooth objectives
with a finite minimum, \citet{abbaszadehpeivasti2022exact} proved a PEP upper
bound on $\min_{0\le k\le T}\|\nabla f(x_k)\|$ for prescribed
$t_k\in(0,\sqrt{3}/L)$, and showed it to be exact when all
$t_k\in(0,1/L]$. None of these results provides a uniform worst-case analysis of the
last-iterate objective value for arbitrary nonconstant stepsize
schedules over arbitrary horizons, particularly when stepsizes larger
than $2/L$ are allowed.

\paragraph{Acceleration by (occasional) long steps.}
Reciprocal Chebyshev steps have long been known to achieve
$O(\sqrt{\kappa}\log(1/\varepsilon))$ complexity for Richardson iteration
on linear systems with positive-definite matrices, where $\kappa$ denotes the
condition number \citep{young1953richardson}.  Fractal orderings of these steps further
improve intermediate stability under bounded additive perturbations
\citep{agarwal2021fractal}. These spectral arguments, however, do not
yield worst-case guarantees for general smooth convex optimization.

More recently, several works have shown that carefully designed
stepsize schedules with occasional long steps can improve the
performance of GD beyond the
quadratic setting. PEP-certified periodic schedules improve the constant in the classical
$T^{-1}$ convergence bound \citep{grimmer2024provably}, while recursive
long-step (silver) schedules improve the dependence on the condition
number in the strongly convex setting
\citep{altschuler2025acceleration}.  The left-heavy schedule, obtained by
reversing the right-heavy schedule of Grimmer, Shu, and Wang, achieves
the same accelerated exponent for the squared norm of the terminal
gradient \citep{grimmer2025objectivegap}. Related silver acceleration
results have also been developed for projected and proximal gradient
methods \citep{bok2025proximal}. Finally, for linearly separable
logistic regression, a horizon-dependent constant stepsize of order $T$
achieves a final loss of $\widetilde O(T^{-2})$
\citep{wu2024logistic}; since the logistic loss has no finite minimizer,
this setting lies outside the class $\cF_{0,L}(\R^d)$ considered herein.

\paragraph{Anytime stepsize-based acceleration.}
\citet{kornowski2024anytime} initiated the study of anytime stepsize
schedules, asking whether a single predetermined infinite stepsize
sequence can accelerate plain GD uniformly over all stopping
times. An
anytime schedule achieving the convergence rate
$O(T^{-2\log_2\rho_{\mathrm{sil}}/(1+\log_2\rho_{\mathrm{sil}})})
=O(T^{-1.119\ldots})$ was recently constructed by
\citet{zhang2025anytime}, while \citet{tsai2026lower} proved that no
fixed positive infinite stepsize schedule can attain a worst-case
last-iterate convergence rate of $o(T^{-4/3})$. In contrast, our work
studies prescribed-horizon schedules rather than anytime schedules.

\paragraph{Other related models.}
Our lower bound concerns the unmodified last iterate. For constant
stepsizes with $L\eta\in(0,1]$, and for the dynamically increasing
Teboulle--Vaisbourd schedule, worst-case analyses show that convex
averaging cannot improve either the objective value or the gradient norm.
For the same constant-step range, the extrapolated output
$x_0+c(x_T-x_0)$, for a suitable scalar $c$, improves the worst-case
objective-value bound by a lower-order term
\citep{luner2025averaging}. Such output modifications fall outside the
method class considered in this paper.

Our model also excludes adaptive, randomized, and signed stepsize
rules. Gradient-feedback and AdaGrad-type methods use adaptive
stepsizes \citep{malitsky2020adaptive,duchi2011adaptive}, randomized
stepsize schedules follow a different model than the deterministic
predetermined schedules considered here
\citep{altschuler2024random}, and existing evidence for negative
stepsizes comes from convex--concave gradient descent--ascent rather
than smooth convex minimization \citep{shugart2025negative}.

\section{Normalized setup and a geometric construction}
\label{sec:geometric-construction}
We begin with a geometric construction that associates a
finite-dimensional hard instance with any prescribed nonnegative stepsize schedule.
The construction selects a
subset of the steps whose normalized stepsizes exceed one, preserves their original order,
and uses them to move the trajectory between orthogonal blocks.  A
Moreau envelope then realizes the resulting piecewise-constant gradient
field as the gradient of a single convex $1$-smooth function.   Maximizing the last-iterate objective gap over all selections of long steps
produces the geometric quantity that will be analyzed in
Sections~\ref{sec:chronology} and~\ref{sec:rank-analysis}.

\subsection{Normalized setting and stepsize decomposition}
\label{sec:normalized-notation}

We first analyze the normalized setting, where the smoothness
parameter and the initial distance are both equal to one, i.e.,
$L=R=1$. In Section~\ref{sec:restoration}, we will recover the general case $L,R>0$  by rescaling the objective, the iterates, and the stepsize schedule.

Specifically, let us consider a convex $1$-smooth objective function $F$ and a normalized stepsize schedule $h=(h_0,\ldots,h_{T-1})\in\R_{\ge 0}^T$. The corresponding GD updates are
\begin{equation}
        x_{t+1}=x_t-h_t\nabla F(x_t),
        \qquad 0\le t<T.
\end{equation}
The portion of each stepsize $h_t$ that exceeds the unit stepsize plays a key
role. To isolate this effect, we decompose each $h_t$ into a capped component $\min\{h_t,1\}$ and an excess component $(h_t-1)_+$, where $(u)_+\coloneqq \max\{u,0\}$. Accordingly, we define
\begin{equation}
 y_t\coloneqq (h_t-1)_+,
 \qquad
 B\coloneqq 1+\sum_{t=0}^{T-1}\min\{h_t,1\}.
 \label{eq:capped-mass}
\end{equation}
We also define the index set of long steps (i.e., those stepsizes larger than $1$) and its cardinality as follows:
\begin{equation}
 \mathcal I_+\coloneqq \{t\in\{0,\ldots,T-1\}:y_t>0\},
 \qquad r\coloneqq |\mathcal I_+|.
 \label{eq:positive-excess-indices}
\end{equation}
Thus, $r$ is the number of long steps, and clearly $r\le T$. Moreover, since each capped component of the stepsize is at most $1$, it follows immediately that $$B\le T+1.$$

\subsection{Block decomposition}

Fix any integer $m\in\{0,\ldots,r\}$.  When $m\ge1$, we choose indices
$0\le t_1<\cdots<t_m<T$ from $\mathcal I_+$ (cf.~\eqref{eq:positive-excess-indices}).  These indices identify $m$ long steps and partition the remaining iterations into the following gaps: 
\[
\begin{aligned}
 G_0& \coloneqq \{0,\ldots,t_1-1\},\\
 G_i& \coloneqq  \{t_i+1,\ldots,t_{i+1}-1\},
       &&\quad 1\le i<m,\\
 G_m& \coloneqq  \{t_m+1,\ldots,T-1\}.
\end{aligned}
\]
Note that any of these gaps may be empty. In the degenerate case $m=0$, we simply set 
$G_0 \coloneqq \{0,\ldots,T-1\}$. 

When $m>0$, for each nonterminal block $0\le i<m$, we define the {\em gap mass} $U_i$ and the associated {\em block scale} $H_i$ by
\begin{subequations}
\begin{equation}
 U_i\coloneqq 1+\sum_{t\in G_i}h_t,
 \qquad
 H_i\coloneqq U_i+y_{t_{i+1}},
 \qquad 0\leq i< m;
 \label{eq:block-masses}
\end{equation}
regarding the terminal gap, we define
\begin{equation}
 H_m:=1+2\sum_{t\in G_m}h_t.
 \label{eq:terminal-mass}
\end{equation}
In contrast, in the degenerate case $m=0$, we do not need definitions of $U_i$ but can still define
\begin{align} H_0=1+2\sum_{t=0}^{T-1}h_t,
\qquad \text{if }m=0.\end{align}
\end{subequations}

To help control the transition between consecutive blocks, we define, for each $0\le i<m$, \begin{equation} \chi_i:= \frac{y_{t_{i+1}}H_{i+1}} {U_i(H_i+H_{i+1})}. \label{eq:local-amplitude-bound} \end{equation} 
As will be shown later via a projection comparison argument, $\chi_i$ is precisely the largest admissible value of the squared amplitude ratio between blocks $i$ and $i+1$.  
Note that in the special case $m=0$, we do not need quantities $U_i$ or $\chi_i$. We are now ready to state the main result regarding realization of a schedule.

\begin{theorem}
\label{thm:geometric-realization}
Fix a nonnegative normalized stepsize schedule $h_0,\ldots,h_{T-1}$. Choose any
integer $m\in\{0,\ldots,r\}$ and, when $m\ge1$, any indices
$0\le t_1<\cdots<t_m<T$ from $\mathcal I_+$. Let the corresponding
quantities $G_i$ and $H_i$, as well as $U_i$ and $\chi_i$ when $m\ge1$,
be defined as above. Let $(\gamma_i)_{i=0}^{m-1}$ be any collection of positive amplitudes (interpreted as the empty collection when $m=0$) satisfying
\[
0<\gamma_i^2\le \chi_i,
\qquad 0\le i<m. 
\]
Then there exist a convex $1$-smooth function $F:\R^{m+1}\to\R$ and an initial
point $x_0\in\R^{m+1}$ such that $0\in\argmin F$, $\|x_0\|=1$, and gradient
descent with the schedule $h=(h_0,\ldots,h_{T-1})$ satisfies
\begin{equation}
 F(x_T)-F(0)
 =\frac1{2H_m}\prod_{i=0}^{m-1}\gamma_i^2.
 \label{eq:geometric-terminal-gap}
\end{equation}
\end{theorem}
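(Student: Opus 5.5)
The plan is to construct $F$ as the Moreau envelope described in the proof strategy, with explicit anchors and block gradients, and to reduce everything to checking projection identities. Set $\lambda_0=1$, $\lambda_{i+1}=\gamma_i\lambda_i$, $X_i=\lambda_i e_i$ in $\R^{m+1}$, and $x_0=X_0=e_0$, so that $\|x_0\|=1$. Define the block gradients
\[
 g_i\coloneqq \frac{X_i-X_{i+1}}{H_i}=\frac{\lambda_i e_i-\lambda_{i+1}e_{i+1}}{H_i}\quad(0\le i<m),
 \qquad g_m\coloneqq \frac{\lambda_m}{H_m}e_m .
\]
Let $K=\operatorname{conv}\{0,g_0,\ldots,g_m\}$ and $F(x)=\min_z\{\sigma_K(z)+\tfrac12\|x-z\|^2\}$. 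By the Moreau identity, $F$ is convex with $\nabla F=\Pi_K$, which is $1$-Lipschitz, so $F$ is $1$-smooth. Since $0\in K$, we have $\nabla F(0)=0$, hence $0\in\argmin F$. Moreover $F(x)=\tfrac12\|x\|^2-\tfrac12\dist(x,K)^2$, so $F(0)=0$, and whenever $\Pi_K(x)=g$ we get $F(x)=\langle g,x\rangle-\tfrac12\|g\|^2$. When $m=0$, only $g_0=e_0/H_0$ is present and the argument reduces to the terminal-block computation below.

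\emph{Trajectory claim.} I would prove by induction over blocks that at the start of block $i$ (time $t_i+1$, or time $0$ for $i=0$) the iterate equals $X_i$. I would further show that the gradient evaluated at each time in $G_i\cup\{t_{i+1}\}$ is $g_i$. Granting this, the iterates in block $i$ are $X_i-s g_i$ with $s$ the accumulated stepsize, which ranges over $[0,U_i-1]$ at gradient evaluations. The selected long step $h_{t_{i+1}}=1+y_{t_{i+1}}$ then brings the total to $U_i-1+1+y_{t_{i+1}}=H_i$, giving $X_i-H_ig_i=X_{i+1}$. In the terminal block we have $x_T=X_m-Sg_m$ with $S=\sum_{t\in G_m}h_t$ and $H_m=1+2S$. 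Provided $\Pi_K(x_T)=g_m$, a direct computation gives
\[
F(x_T)=\frac{\lambda_m^2}{H_m}\Bigl(1-\frac{S}{H_m}\Bigr)-\frac{\lambda_m^2}{2H_m^2}=\frac{\lambda_m^2}{2H_m},
\]
which is \eqref{eq:geometric-terminal-gap} since $\lambda_m^2=\prod_i\gamma_i^2$. Note that unselected long steps inside a gap cause no trouble: they merely advance $s$ along the same ray.

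\emph{Projection comparisons (the main obstacle).} The condition $\Pi_K(x)=g_i$ is equivalent to $\langle x-g_i,v-g_i\rangle\le0$ for every vertex $v\in\{0,g_0,\ldots,g_m\}$. Each such inequality is affine in $s$, so it suffices to check it at the endpoints $s=0$ and $s=U_i-1$ (for the terminal block, $s\in[0,S]$). Because of orthogonality, $g_i$ is supported on $e_i,e_{i+1}$, so only the vertices $0$, $g_{i-1}$ and $g_{i+1}$ give nontrivial conditions; for $|j-i|\ge2$ all the relevant inner products vanish and the condition reduces to $-\|g_i\|^2\le0$.
\begin{itemize}
\item For the vertex $0$, the condition reads $(s+1)(\lambda_i^2+\lambda_{i+1}^2)\le H_i\lambda_i^2$. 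At the worst endpoint $s+1=U_i$ this becomes $\gamma_i^2\le y_{t_{i+1}}/U_i$, which follows from $\gamma_i^2\le\chi_i$ because $H_{i+1}/(H_i+H_{i+1})<1$.
\item The vertex $g_{i+1}$ should produce exactly the constraint $\gamma_i^2\le\chi_i$. This is where the factor $H_{i+1}/(H_i+H_{i+1})$ enters, since $\langle g_i,g_{i+1}\rangle=-\lambda_{i+1}^2/(H_iH_{i+1})$. I would expand this inequality carefully at $s=U_i-1$ and confirm that it rearranges to $\gamma_i^2 U_i(H_i+H_{i+1})\le y_{t_{i+1}}H_{i+1}$.
\item The vertex $g_{i-1}$ involves the negative coordinate of $g_{i-1}$ on $e_i$. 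There I expect $\langle x-g_i,g_{i-1}\rangle\le0$ and $\langle x-g_i,g_i\rangle\ge0$ to hold automatically, giving slack.
\item For the terminal block, the same computation with $g_m=\lambda_m e_m/H_m$ should give conditions that hold automatically, using $S+1\le H_m$.
\end{itemize}

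I expect the bookkeeping for the $g_{i+1}$ comparison to be the delicate step. That is where the precise form of $\chi_i$ must emerge, and the sign of the $\lambda_{i+2}$ term in $g_{i+1}$ needs to be tracked carefully when $i+1<m$ versus $i+1=m$.
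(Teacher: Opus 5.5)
Your proposal is correct and follows the paper's proof essentially step for step: the same anchors $X_i=\lambda_ie_i$, the same block gradients (your $g_i=(X_i-X_{i+1})/H_i$ is exactly the paper's $\frac{\lambda_i}{H_i}(e_i-\gamma_ie_{i+1})$), the same Moreau-envelope instance with $\nabla F=\Pi_K$, the same block-by-block trajectory induction, and the same neighbor-only projection comparison, in which the $g_{i+1}$ test at $u=U_i$ rearranges to $\gamma_i^2\le\chi_i$. One small slip: for $|j-i|\ge2$ the variational inequality reduces to $\langle x-g_i,g_i\rangle\ge0$, not to $-\|g_i\|^2\le0$, but this is the same as your vertex-$0$ condition, which you already verify, so nothing breaks.
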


In other words, every selection of long steps and admissible amplitude factors can be realized by a convex \(1\)-smooth instance, with the final objective value determined by the terminal block scale \(H_m\) and the product of the squared amplitude factors $(\gamma_i)_{i=0}^{m-1}$. 

\subsection{Proof of Theorem~\ref{thm:geometric-realization}}
\label{subsec:geometric-construction}

The construction is based on the classical notions of support functions and Moreau envelopes.  To be precise, for a compact convex set $K\subseteq\R^d$ obeying $0\in K$, we define its support function by
\begin{equation}
        \sigma_K(z)\coloneqq \max_{g\in K}\langle g,z\rangle,
        \label{eq:defn-sigma-K-z}
\end{equation}
as well as its associated Moreau envelope 
\begin{equation}
 \mathcal E_K(x)\coloneqq \min_{z\in\R^d}
 \left\{\sigma_K(z)+\frac12\|x-z\|^2\right\}.
 \label{eq:moreau}
\end{equation}
We also define the corresponding proximal operator by
\begin{equation}
 \operatorname{prox}_{\sigma_K}(x)
 \coloneqq \argmin_{z\in\R^d}
 \left\{\sigma_K(z)+\frac12\|x-z\|^2\right\}.
 \label{eq:proximal-point}
\end{equation}
Note that the minimizer in \eqref{eq:proximal-point} is unique because the objective is strongly convex in $z$.

We next record the standard relationship between this proximal operator, Euclidean projection onto $K$, and the gradient of the Moreau envelope.  For completeness, we include the proof of this classical result in Appendix~\ref{app:moreau-projection}.

\begin{lemma}
\label{lem:moreau-projection}
Let $K\subseteq\R^d$ be compact and convex with $0\in K$, and set
$F\coloneqq \mathcal E_K$ as defined in \eqref{eq:moreau}.  Write $\Pi_K(x)$ for
the Euclidean projection of $x$ onto $K$. Then we have
\begin{equation}
 \operatorname{prox}_{\sigma_K}(x)=x-\Pi_K(x),
 \label{eq:moreau-proximal-formula}
\end{equation}
and
\begin{equation}
 \nabla F(x)=\Pi_K(x),
 \qquad
 F(x)=\frac12\|x\|^2
      -\frac12\dist(x,K)^2.
 \label{eq:moreau-projection-formula}
\end{equation}
Consequently, $F$ is convex and its gradient is $1$-Lipschitz.
\end{lemma}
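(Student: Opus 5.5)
The plan is to derive everything from the variational characterization of the prox of a support function through its conjugate. Since $K$ is compact, convex and nonempty, $\sigma_K$ is a finite, convex, positively homogeneous function. Its Fenchel conjugate is the indicator $\iota_K$, because $\sigma_K$ is by definition the conjugate of $\iota_K$, and $\iota_K$ is closed and convex, so the biconjugate theorem applies.

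\textbf{Step 1: the prox formula \eqref{eq:moreau-proximal-formula}.} Moreau's decomposition gives, for any closed proper convex $\phi$,
\[
x=\operatorname{prox}_{\phi}(x)+\operatorname{prox}_{\phi^*}(x).
\]
With $\phi=\sigma_K$ we have $\operatorname{prox}_{\iota_K}=\Pi_K$, which yields $\operatorname{prox}_{\sigma_K}(x)=x-\Pi_K(x)$. To keep things self-contained, I would instead verify this directly. Set $z=x-\Pi_K(x)$. The optimality condition for \eqref{eq:proximal-point} is $x-z\in\partial\sigma_K(z)$, where
\[
\partial\sigma_K(z)=\argmax_{g\in K}\langle g,z\rangle.
\]
So it suffices to show that $\Pi_K(x)$ maximizes $\langle g,x-\Pi_K(x)\rangle$ over $g\in K$. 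This is exactly the projection variational inequality
\[
\langle g-\Pi_K(x),\,x-\Pi_K(x)\rangle\le 0\qquad\text{for all } g\in K.
\]
Uniqueness follows from strong convexity.

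\textbf{Step 2: the value formula in \eqref{eq:moreau-projection-formula}.} Plug $z=x-\Pi_K(x)$ into the objective. The quadratic term is $\tfrac12\|\Pi_K(x)\|^2$. For the support term, write $p=\Pi_K(x)$; by Step 1, $\sigma_K(z)=\langle p,\,x-p\rangle$. Then
\[
F(x)=\langle p,x-p\rangle+\tfrac12\|p\|^2=\langle p,x\rangle-\tfrac12\|p\|^2=\tfrac12\|x\|^2-\tfrac12\|x-p\|^2,
\]
and $\|x-p\|=\dist(x,K)$.

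\textbf{Step 3: the gradient formula and the consequences.} The function $\tfrac12\dist(x,K)^2$ is differentiable with gradient $x-\Pi_K(x)$. This is a standard fact that I would prove by the two-sided estimate
\[
\tfrac12\dist(y,K)^2\le\tfrac12\|y-\Pi_K(x)\|^2=\tfrac12\dist(x,K)^2+\langle x-\Pi_K(x),y-x\rangle+\tfrac12\|y-x\|^2,
\]
together with the symmetric inequality obtained by swapping $x$ and $y$ and using nonexpansiveness of $\Pi_K$. This gives $\nabla F(x)=x-(x-\Pi_K(x))=\Pi_K(x)$. Since $\Pi_K$ is $1$-Lipschitz (firm nonexpansiveness), $\nabla F$ is $1$-Lipschitz. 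Convexity follows either from $F$ being an infimal convolution of two convex functions, or from monotonicity of $\Pi_K$. Finally, $0\in K$ gives $F(0)=0$ and $\nabla F(0)=0$, although the lemma does not require this.

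The only mildly delicate point is the differentiability of $\dist(\cdot,K)^2$ in Step 3. The upper bound above is immediate. For the lower bound, I would expand $\|y-\Pi_K(y)\|^2$ around $x$ and use $\langle x-\Pi_K(x),\,\Pi_K(x)-\Pi_K(y)\rangle\ge0$, which is again the projection inequality, to obtain the matching $-\tfrac12\|y-x\|^2$-type remainder. Everything else is routine.
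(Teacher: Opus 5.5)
Your proposal is correct and follows essentially the paper's route. Steps~1 and~2 are the paper's argument: the projection variational inequality gives $\Pi_K(x)\in\partial\sigma_K(x-\Pi_K(x))$, and you then substitute into the objective. Your two-sided comparison $\dist(y,K)\le\|y-\Pi_K(x)\|$, together with its swap, is exactly the paper's sandwich $\langle \Pi_K(x),y-x\rangle\le F(y)-F(x)\le\langle\Pi_K(y),y-x\rangle$, rewritten via the identity $\frac12\|y\|^2-\frac12\|y-g\|^2=\langle g,y\rangle-\frac12\|g\|^2$.

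The only difference is the convexity argument. The paper reads it off the representation $F(x)=\max_{g\in K}\{\langle g,x\rangle-\frac12\|g\|^2\}$, a maximum of affine functions, whereas you use infimal convolution or monotonicity of $\Pi_K$. Both are valid.
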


Motivated by Lemma~\ref{lem:moreau-projection}, which reveals that the gradient of $\mathcal E_K$ is simply Euclidean projection onto $K$, we shall choose $K$ as the convex hull of the origin and the prescribed block gradients, and then verify that, at each iterate visited by GD, the appropriate vertex of $K$ is the Euclidean projection. This allows a single globally defined convex $1$-smooth function to generate the desired piecewise-constant gradient pattern along the GD trajectory.

Upon fixing a block decomposition and admissible amplitude factors as in Theorem~\ref{thm:geometric-realization}, we choose an orthonormal
basis $e_0,\ldots,e_m$ of $\R^{m+1}$, and define
\[
        \lambda_0:=1, \quad \lambda_{i+1}:=\gamma_i\lambda_i,
        \quad 0\le i<m,
        \qquad
        X_i:=\lambda_i e_i,
        \quad 0\le i\le m.
\]
Next, we define the block gradients by 
\begin{equation}
 g_i:=\frac{\lambda_i}{H_i}(e_i-\gamma_ie_{i+1})
 \quad(0\le i<m),
 \qquad
 g_m:=\frac{\lambda_m}{H_m}e_m.
 \label{eq:block-gradient}
\end{equation}
We write
$$s_{\mathrm{tail}}\coloneqq \sum_{t\in G_m}h_t,
\qquad \Longrightarrow \qquad H_m=1+2s_{\mathrm{tail}},$$ 
and set
\begin{equation}
 \begin{aligned}
 K&\coloneqq\operatorname{conv}\big\{0,g_0,\ldots,g_m\big\},\\
 F(x)&\coloneqq\mathcal E_K(x)
 =\min_{z\in\R^{m+1}}
   \left\{\sigma_K(z)+\frac12\|x-z\|^2\right\},\\
 x_0&\coloneqq X_0=e_0.
 \end{aligned}
 \label{eq:geometric-objective}
\end{equation}
In view of Lemma~\ref{lem:moreau-projection}, $F$ is convex and $1$-smooth. Moreover, we have $\|x_0\|=1$ and $0\in \argmin F$ since $0\in K$.

The main ingredient of the proof is to show that, throughout block $i$, the Euclidean projection
onto $K$ is precisely the vertex $g_i$. The construction has a simple yet important local structure:
$g_i\in\operatorname{span}\{e_i,e_{i+1}\}$ for $i<m$, while
$g_m\in\operatorname{span}\{e_m\}$. Consequently, on a nonterminal block $i$,
only $g_{i-1}$ (when $i>0$), $g_i$, and $g_{i+1}$ can have a nonzero inner
product with the shifted iterate. The predecessor contribution is nonpositive,
so verifying the projection property reduces to comparing $g_i$ with its
successor $g_{i+1}$. The condition $\gamma_i^2\le\chi_i$ is exactly the
criterion ensuring that this comparison holds. Consequently, the gradient of
the Moreau envelope remains equal to $g_i$ throughout block $i$, and the GD
iterates follow the prescribed block trajectory by construction.

\begin{proof}[Proof of Theorem~\ref{thm:geometric-realization}]
For any $x,g\in\R^{m+1}$, the characterization of Euclidean projection states that
\[
 g=\Pi_K(x)
 \quad\Longleftrightarrow\quad
 g\in K
 \ \text{ and }\
 \langle x-g,v-g\rangle\le0
 \quad\text{for every }v\in K.
\]
Writing $z=x-g$, we see that $g=\Pi_K(x)$ holds if and only if $g$ maximizes
$\langle\,\cdot\,,z\rangle$ over $K$.  Since $K$ is the convex hull of
$\{0,g_0,\ldots,g_m\}$, it is therefore sufficient to verify this condition
only for the generators $0,g_0,\ldots,g_m$.

\medskip\noindent\textbf{Step 1: characterizing projection within each block.}
Our goal in this step is to verify that, throughout each block, the prescribed block gradient is the Euclidean projection onto $K$. By the discussion preceding the proof, this reduces to showing that the corresponding generator maximizes a suitable linear functional over $K$.

Specifically, for each nonterminal block $0\le i<m$ and each $s\in[0,U_i-1]$,  set
$u \coloneqq s+1\in[1,U_i]$, and define 
\begin{equation}
 \begin{aligned}
 q_i(s)&\coloneqq X_i-sg_i,\\
 z_{i,u}&\coloneqq q_i(s)-g_i
 =\frac{\lambda_i}{H_i}
   \{(H_i-u)e_i+u\gamma_ie_{i+1}\}.
 \end{aligned}
 \label{eq:block-shifted-point}
\end{equation}
The cumulative stepsize before each update in block $i$ is a value of $s$ in $[0,U_i-1]$, since
$\sum_{t\in G_i}h_t=U_i-1$.

For the terminal block, we similarly define, for every $s\in[0,s_{\mathrm{tail}}]$, 
\[
        q_m(s):=X_m-sg_m,
        \qquad
        z_m(s):=q_m(s)-g_m
        =\frac{\lambda_m(H_m-s-1)}{H_m}e_m.
\]
Note that every $z_m(s)$ is a nonnegative multiple of $e_m$, since
\[
        H_m-s-1=2s_{\mathrm{tail}}-s
        \ge s_{\mathrm{tail}}\ge0.
\]

We first analyze a nonterminal block. 
By \eqref{eq:block-gradient} and \eqref{eq:block-shifted-point},
$z_{i,u}\in\operatorname{span}\{e_i,e_{i+1}\}$, while
$g_j\in\operatorname{span}\{e_j,e_{j+1}\}$ for $j<m$ and
$g_m\in\operatorname{span}\{e_m\}$.
Since the basis vectors are orthonormal, we have
\[
        \langle g_j,z_{i,u}\rangle=0
        \qquad
        \bigl(j\notin\{i-1,i,i+1\},\ 0\le j\le m\bigr).
\]
Thus, only the predecessor $g_{i-1}$ (when $i>0$), $g_i$, and the successor $g_{i+1}$
can have nonzero inner product with $z_{i,u}$.  For $i>0$, the predecessor
satisfies
\[
 \langle g_{i-1},z_{i,u}\rangle
 =-\frac{\lambda_i^2(H_i-u)}{H_{i-1}H_i}\le0;
\]
and when $i=0$, there is no predecessor.  Consequently, we only need to
compare $g_i$ with its successor and with the zero inner products.  The
two remaining inner products are
\begin{align*}
 \langle g_i,z_{i,u}\rangle
 &=\frac{\lambda_i^2}{H_i^2}
   \left\{H_i-u(1+\gamma_i^2)\right\},\\
 \langle g_{i+1},z_{i,u}\rangle
 &=\frac{\lambda_i^2\gamma_i^2u}{H_iH_{i+1}}.
\end{align*}
The second formula remains valid for $i=m-1$, where $g_{i+1}=g_m$.
The first quantity decreases with $u$, whereas the second increases with $u$, so it
suffices to compare them at $u=U_i$.  At that endpoint, we have
\begin{align*}
 \langle g_i,z_{i,U_i}\rangle
 &=\frac{\lambda_i^2
       \{y_{t_{i+1}}-U_i\gamma_i^2\}}{H_i^2},\\
 \langle g_{i+1},z_{i,U_i}\rangle
 &=\frac{\lambda_i^2\gamma_i^2U_i}{H_iH_{i+1}}.
\end{align*}
The successor value is positive because
$\lambda_i,\gamma_i,U_i,H_i,H_{i+1}>0$.  Moreover,
\begin{align*}
 \langle g_i,z_{i,U_i}\rangle
 \ge \langle g_{i+1},z_{i,U_i}\rangle
 &\quad \iff \quad 
 \frac{y_{t_{i+1}}-U_i\gamma_i^2}{H_i^2}
 \ge\frac{\gamma_i^2U_i}{H_iH_{i+1}} \quad \iff \quad 
 \gamma_i^2\le
 \frac{y_{t_{i+1}}H_{i+1}}
 {U_i(H_i+H_{i+1})}
 =\chi_i,
\end{align*}
which holds by assumption.  Therefore, for every $1\le u\le U_i$, one has
\[
 \langle g_i,z_{i,u}\rangle
 \ge\langle g_i,z_{i,U_i}\rangle
 \ge\langle g_{i+1},z_{i,U_i}\rangle
 \ge\langle g_{i+1},z_{i,u}\rangle.
\]
In particular, the inner product with $g_i$ is positive and, therefore,  dominates the
generators with zero or negative inner product.  Consequently, $g_i$ maximizes
$\langle g,z_{i,u}\rangle$ over $g\in K$.

The terminal block is even simpler. Observe that $\langle g_m,z_m(s)\rangle\ge0$.  When $m>0$,
$\langle g_{m-1},z_m(s)\rangle\le0$, and all other generators are
orthogonal to $z_m(s)$.  Hence $g_m$ maximizes the support functional at
$z_m(s)$.  If $s_{\mathrm{tail}}=0$, then $z_m(s)=0$, so every generator
attains the same value, and in particular $g_m$ is still a maximizer.

We have therefore established the projection property throughout every block. Indeed, for every  $v\in K$,
\[
 \langle z_{i,u},v-g_i\rangle\le0,
 \qquad
 \langle z_m(s),v-g_m\rangle\le0.
\]
Since $q_i(s)=z_{i,u}+g_i$ and $q_m(s)=z_m(s)+g_m$, these are exactly the variational inequalities characterizing Euclidean
projection. It thus follows that
\[
 \Pi_K(q_i(s))=g_i,
 \qquad
 \Pi_K(q_m(s))=g_m.
\]
Applying Lemma~\ref{lem:moreau-projection} then yields
\begin{equation}
 \begin{aligned}
 \nabla F(q_i(s))&=g_i,
 &\operatorname{prox}_{\sigma_K}(q_i(s))&=z_{i,u},
 &&0\le i<m,\quad 0\le s\le U_i-1,\\
 \nabla F(q_m(s))&=g_m,
 &\operatorname{prox}_{\sigma_K}(q_m(s))&=z_m(s),
 &&0\le s\le s_{\mathrm{tail}}.
 \end{aligned}
 \label{eq:block-gradient-identity}
\end{equation}

\medskip\noindent\textbf{Step 2: characterizing the GD trajectory.}
Set
\[
 b_0:=0,
 \qquad b_i:=t_i+1\quad(1\le i\le m),
\]
so that $b_i$ is the first time index of block $i$.  For $0\le i<m$ and
$t\in G_i\cup\{t_{i+1}\}$,  define
\[
        s_{i,t}:=\sum_{\tau=b_i}^{t-1}h_\tau.
\]
Having established the gradient identities in Step 1, we now verify that the GD iterates follow the prescribed trajectory. Specifically, we claim that  throughout block \(i\),
\begin{equation}
        x_t=q_i(s_{i,t})=X_i-s_{i,t}g_i.
 \label{eq:block-time-induction}
\end{equation}

We prove the claim by induction over the blocks and, within each block, by induction over the iterations. For the first block, one has $x_{b_0}=x_0=X_0=q_0(0)$.  We now fix $i<m$ and
suppose $x_{b_i}=X_i$.  For any index
$t\in G_i\cup\{t_{i+1}\}$, we have
$0\le s_{i,t}\le U_i-1$.  Hence, whenever
\eqref{eq:block-time-induction} holds,
\eqref{eq:block-gradient-identity} gives $\nabla F(x_t)=g_i$.  If the
next iterate remains in the same block, then it holds that
\[
 x_{t+1}=x_t-h_t\nabla F(x_t)
 =X_i-(s_{i,t}+h_t)g_i
 =X_i-s_{i,t+1}g_i, 
\]
which proves the within-block induction. 
At the selected transition index $t=t_{i+1}$,
\[
 s_{i,t_{i+1}}=\sum_{t\in G_i}h_t=U_i-1,
\]
and the cumulative stepsize after the transition update is
\[
 s_{i,t_{i+1}}+h_{t_{i+1}}
 =(U_i-1)+(1+y_{t_{i+1}})=H_i.
\]
Thus the long step moves the iterate exactly to the starting point of the next block:
\[
 x_{b_{i+1}}
 =x_{t_{i+1}+1}
 =X_i-H_ig_i
 =\lambda_i\gamma_i e_{i+1}
 =X_{i+1}.
\]
This establishes the induction hypothesis for the next block and hence
verifies the entire nonterminal trajectory. 

For the terminal block, we define
\[
        s_{m,t}:=\sum_{\tau=b_m}^{t-1}h_\tau,
        \qquad b_m\le t\le T.
\]
We have $x_{b_m}=X_m$: this is the initial condition when $m=0$, and
follows from the preceding block induction when $m>0$. Applying
\eqref{eq:block-gradient-identity} and the same within-block induction gives
\[
 x_t=q_m(s_{m,t})=X_m-s_{m,t}g_m
 \quad (b_m\le t\le T).
\]
In particular,
\begin{equation}
        x_T=q_m(s_{\mathrm{tail}})
        =X_m-s_{\mathrm{tail}}g_m.
 \label{eq:terminal-iterate}
\end{equation}
This completes the characterization of the GD trajectory. Thus every update, including each zero-stepsize update, is a GD step for the same objective function $F$; the construction also covers empty gaps.

\medskip\noindent\textbf{Step 3: calculating the objective value at the last iterate.} 
It remains to evaluate the objective value at the last iterate. 
By virtue of property~\eqref{eq:terminal-iterate} and the terminal part of
\eqref{eq:block-gradient-identity},
\[
 \Pi_K(x_T)=g_m,
 \qquad
 z_T:=\operatorname{prox}_{\sigma_K}(x_T)
 =x_T-g_m=z_m(s_{\mathrm{tail}})
 =\frac{\lambda_m s_{\mathrm{tail}}}{H_m}e_m.
\]
Thus $\dist(x_T,K)=\|z_T\|$.  Using
\eqref{eq:moreau-projection-formula} and $F(0)=0$, we obtain
\[
\begin{split}
 F(x_T)-F(0)
 &=\frac12\|z_T+g_m\|^2-\frac12\|z_T\|^2=\langle g_m,z_T\rangle+\frac12\|g_m\|^2=\frac{\lambda_m^2}{H_m^2}
   \left(s_{\mathrm{tail}}+\frac12\right)
 =\frac{\lambda_m^2}{2H_m},
\end{split}
\]
where the last equality uses $H_m=1+2s_{\mathrm{tail}}$.  Finally,
since $\lambda_m^2=\prod_{i=0}^{m-1}\gamma_i^2$, we readily see that
\[
 F(x_T)-F(0)
 =\frac1{2H_m}\prod_{i=0}^{m-1}\gamma_i^2,
\]
which is precisely \eqref{eq:geometric-terminal-gap}.  The same construction covers $m=0$
and uses exactly $m+1\le T+1$ dimensions.
\end{proof}

\subsection{The resulting key functional}
\label{subsec:lower-bound-functional}

Theorem~\ref{thm:geometric-realization} shows that, for a fixed block decomposition, the final objective value is maximized by taking each amplitude factor as large as allowed, namely,  $\gamma_i^2=\chi_i$.  Optimizing further over all admissible choices of transition indices therefore leads to the functional 
\begin{equation}
 \cC_T(h)
 :=\max_{0\le m\le r}\;
   \max_{\substack{0\le t_1<\cdots<t_m<T\\
                    y_{t_j}>0\ (1\le j\le m)}}
 \frac1{H_m}\prod_{i=0}^{m-1}\chi_i.
 \label{eq:lower-bound-functional}
\end{equation}
Here, the terminal block scale $H_m$ and the local bounds $\chi_i$ are determined by
the selected indices through \eqref{eq:terminal-mass} and
\eqref{eq:local-amplitude-bound}, respectively. The purpose of $\cC_T(h)$ is
to separate the geometric realization from the remaining schedule analysis.
For a fixed schedule $h$, the quantity $\cC_T(h)/2$ is the largest terminal
objective gap attained within the family of instances constructed in
Theorem~\ref{thm:geometric-realization}. The following corollary makes this
attainment precise and thereby reduces the remaining proof to lower-bounding
$\cC_T(h)$.

\begin{corollary}
\label{cor:functional-attainment}
For every nonnegative normalized stepsize schedule $h$ of length $T$, there exist
an integer $1\le d\le T+1$, a function $F\in\cF_{0,1}(\R^d)$ with
$0\in\argmin F$, and an initial point $x_0\in\R^d$ with $\|x_0\|=1$
such that GD with schedule $h$ satisfies
\begin{equation}
 2\{F(x_T)-F(0)\}=\cC_T(h).
 \label{eq:functional-attainment}
\end{equation}
\end{corollary}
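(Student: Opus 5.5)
The plan is to pick a maximizing selection in \eqref{eq:lower-bound-functional} and apply Theorem~\ref{thm:geometric-realization} to it with the largest admissible amplitudes. The maximum in \eqref{eq:lower-bound-functional} runs over a finite set: $m$ ranges over $\{0,\ldots,r\}$, and for each $m$ there are finitely many increasing index tuples drawn from $\mathcal I_+$. The case $m=0$ is always available, so the set is nonempty and the maximum is attained. Fix a maximizer $m$ and indices $t_1<\cdots<t_m$ in $\mathcal I_+$. Then $\cC_T(h)=H_m^{-1}\prod_{i<m}\chi_i$, with $H_m$ and $\chi_i$ computed from this selection.

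Next I will check that $\gamma_i:=\sqrt{\chi_i}$ is an admissible choice in Theorem~\ref{thm:geometric-realization}, which needs $\gamma_i>0$. Each $t_{i+1}\in\mathcal I_+$, so $y_{t_{i+1}}>0$. Moreover, $U_i\ge1$, $H_i\ge U_i>0$ and $H_{i+1}\ge1$, because all of them are $1$ plus nonnegative sums. Hence $\chi_i>0$ and $\gamma_i^2=\chi_i$ satisfies $0<\gamma_i^2\le\chi_i$.

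With these choices, Theorem~\ref{thm:geometric-realization} gives a convex $1$-smooth $F$ on $\R^{m+1}$ and a point $x_0$ with $\|x_0\|=1$ and $0\in\argmin F$. By \eqref{eq:geometric-terminal-gap}, $2\{F(x_T)-F(0)\}=H_m^{-1}\prod_i\chi_i=\cC_T(h)$. Set $d=m+1$. Since $m\le r\le T$, we have $1\le d\le T+1$. Finally, $F\in\cF_{0,1}(\R^d)$ because $F$ is differentiable (Lemma~\ref{lem:moreau-projection}), convex, has $1$-Lipschitz gradient, and attains its minimum at $0$.

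The argument is mostly bookkeeping. The only point needing care is that the maximum is actually attained, which the finiteness argument settles, together with the strict positivity of $\chi_i$ required by the theorem's hypothesis.
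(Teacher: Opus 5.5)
Your proposal is correct and follows essentially the same route as the paper: choose a maximizing selection (attained because the index set is finite), set $\gamma_i=\sqrt{\chi_i}$ after checking $\chi_i>0$, and invoke Theorem~\ref{thm:geometric-realization} in dimension $d=m+1\le T+1$. Your added checks that $U_i$, $H_i$, $H_{i+1}$ are positive and that $F\in\cF_{0,1}(\R^d)$ simply spell out details the paper treats more briefly.
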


\begin{proof}
Since the maximization in \eqref{eq:lower-bound-functional} is over finitely many choices of transition indices, it is attained.  Choose indices $t_1<\cdots<t_m$ and, for each nonterminal block, set
$\gamma_i \coloneqq \sqrt{\chi_i}$.  Each
$\chi_i$ is positive by \eqref{eq:local-amplitude-bound}, and hence these
factors satisfy the assumptions of
Theorem~\ref{thm:geometric-realization}. The theorem therefore yields a convex $1$-smooth instance in dimension $d=m+1\le T+1$ satisfying
\[
 2\{F(x_T)-F(0)\}
 =\frac1{H_m}\prod_{i=0}^{m-1}\chi_i
 =\cC_T(h).
\]
If the maximum is attained at $m=0$, the same theorem applies with the empty product interpreted as one. Thus the claimed result holds in all cases.
\end{proof}
The empty selection $m=0$ is always admissible and contributes
\[
    \left(1+2\sum_{t=0}^{T-1}h_t\right)^{-1}.
\]
When all steps are short, this already gives an objective gap of order $T^{-1}$. Large
excesses can make this baseline much smaller, however, so the remaining
analysis must exploit nonempty selections.

The main difficulty is that each local factor $\chi_i$ couples two neighboring
blocks, so the value of a selected chain depends on the temporal ordering of
the long steps. Section~\ref{sec:chronology} removes this dependence through
two matchings, while Section~\ref{sec:rank-analysis} chooses an appropriate
rank cutoff and controls the residual schedule mass.

\section{Order-independent bounds via matchings}
\label{sec:chronology}

The functional $\cC_T(h)$ in \eqref{eq:lower-bound-functional} depends on both
the magnitudes and the temporal order of the selected long steps. To separate
the magnitudes of the long-step excesses from their temporal arrangement, we
first rank the positive excesses by size. If $r\ge1$, break ties arbitrarily
and label the distinct indices in $\mathcal I_+$ as
$\tau_1,\ldots,\tau_r$ so that, with $a_s:=y_{\tau_s}$,
\[
    a_1\ge a_2\ge\cdots\ge a_r>0.
\]
When $r=0$, the ranked list is empty. For $0\le q\le r$, define
\begin{equation}
    D_q:=B+\sum_{s=q+1}^{r}a_s.
 \label{eq:rank-tail}
\end{equation}
We call $D_q$ the residual schedule mass at rank $q$. It consists of the
capped base mass $B$ together with all positive excesses except the $q$
largest ones. In particular,
\[
    D_r=B,
    \qquad
    D_0=1+\sum_{t=0}^{T-1}h_t,
\]
because $h_t=\min\{h_t,1\}+y_t$.

The cutoff $q$ separates the $q$ excesses used to construct a candidate chain
from the remaining schedule mass. Once the selected excesses are restored to
chronological order, $D_q$ decomposes exactly into the intervening gap masses
and the terminal mass, as shown in \eqref{eq:mass-decomposition}. It therefore
serves as the common mass budget in the matching argument below and, later, as
the state variable in the cutoff analysis of
Section~\ref{sec:rank-analysis}.

For $2\le q\le r$, we restore the $q$ largest excesses to chronological order
and associate the reciprocals of the corresponding local factors with the
edges of a path, including a final edge for the terminal factor. The odd and
even edges form two matchings. Bounding their products by the corresponding
optimal matching values removes the temporal order, and a total reciprocal
mass estimate then gives an order-independent lower bound on $\cC_T(h)$.
Corollary~\ref{cor:functional-attainment} connects this functional bound back
to a valid normalized GD instance.

\subsection{The top-ranked excesses in temporal order}

Fix $1\le q\le r$, and consider the time labels
$\tau_1,\ldots,\tau_q$ of the $q$ largest excesses.  We define $\pi$ as the
unique permutation of $\{1,\ldots,q\}$ for which
\[
 \tau_{\pi(1)}<\cdots<\tau_{\pi(q)},
\]
which is well defined because the time labels are distinct.  We then set
\begin{align}
c_i \coloneqq a_{\pi(i)}=y_{\tau_{\pi(i)}} \qquad \text{for }1\le i\le q;
\end{align}
these are the same
excesses, now indexed by time rather than by rank.  We use the gap quantities
associated with 
$$t_i \coloneqq \tau_{\pi(i)}, \qquad 1\le i\le q,$$ and denote the schedule mass following
its last selected time by
\[
        V \coloneqq 1+\sum_{t=\tau_{\pi(q)}+1}^{T-1}h_t.
\]
The quantity $D_q$ (cf.~\eqref{eq:rank-tail}) then decomposes as
\begin{equation}
        \sum_{i=0}^{q-1}U_i+V=D_q,
        \qquad
        H_q=2V-1.
 \label{eq:mass-decomposition}
\end{equation}
The first identity holds because each selected long step contributes
its unit part to $B$, each unselected excess remains in $D_q$, and every other schedule term belongs either to one of the gaps or to the terminal portion $V$.  Since $V\ge1$, \eqref{eq:mass-decomposition} also immediately gives
\begin{equation}
        \sum_{i=0}^{q-1}U_i\le D_q-1,
        \qquad
        U_{q-1}+H_q\le2D_q-1,
\end{equation}
where the second inequality follows because 
$U_{q-1}+H_q\le\sum_iU_i+2V-1=D_q+V-1\le2D_q-1$.
For these selected indices, $H_i=U_i+c_{i+1}$ for $0\le i<q$, and the local factors are
those in \eqref{eq:local-amplitude-bound}.  We expand their reciprocals as
\begin{equation}
\begin{aligned}
 \chi_{i-1}^{-1}
 &=\frac{U_{i-1}}{c_i}
   +\frac{U_{i-1}}{H_i}
   +\frac{U_{i-1}^2}{c_iH_i},
 &&1\le i<q,\\
 H_q\chi_{q-1}^{-1}
 &=U_{q-1}\left(1+\frac{U_{q-1}+H_q}{c_q}\right).
\end{aligned}
 \label{eq:local-factor-identities}
\end{equation}

\subsection{A path bound and its two matchings}

Each nonterminal factor $\chi_{i-1}^{-1}$, $1\le i<q$, couples two consecutive excesses
$c_i,c_{i+1}$ in temporal order.  The terminal factor involves only
$c_q$; after normalization, we represent it by adjoining one auxiliary terminal
vertex.  This produces a path on $q+1$ vertices.  Alternating edges along
the path share no endpoints, which is why two matchings, one for each
parity, give a bound that no longer depends on the particular temporal ordering of the selected excesses.

For $u,v>0$,  define
\begin{equation}
        \psi(u,v)=\frac{u+v+uv}{2}.
 \label{eq:psi-kernel}
\end{equation}
For a finite label set $\mathcal V$, let
$W=(w_\alpha)_{\alpha\in\mathcal V}$ be a family of positive vertex weights. For an integer $1\le k\le\lfloor |\mathcal V|/2\rfloor$, define $P_\psi(W,k)$ as the maximum product of $\psi$ over all
$k$-edge matchings on the labeled elements of $W$:
\[
P_\psi(W,k)
:=\max_{\substack{
\alpha_1,\beta_1,\ldots,\alpha_k,\beta_k\in\mathcal V\\
\text{all distinct}}}
\prod_{j=1}^k\psi(w_{\alpha_j},w_{\beta_j}),
\qquad
P_\psi(W,0):=1.
\]

For a rank cutoff $2\le q\le r$, we define
\begin{equation}
 w_s^{(q)}=\frac{2D_q}{qa_s}\quad(1\le s\le q),
 \qquad
 w_\dagger^{(q)}=\frac1{q-1},
 \qquad
 W_q=\Big\{w_1^{(q)},\ldots,w_q^{(q)},w_\dagger^{(q)}\Big\}.
 \label{eq:matching-vertices}
\end{equation}
The first $q$ weights are normalized reciprocals of the selected excesses,
while $w_\dagger^{(q)}$ is the auxiliary terminal weight.  The multiset
$W_q$ is labeled, so equal numerical weights remain distinct.  We then set
\begin{equation}
 k_+=\left\lceil\frac q2\right\rceil,
 \qquad
 k_-=\left\lfloor\frac q2\right\rfloor,
 \qquad
 \cM_q=P_\psi(W_q,k_+)P_\psi(W_q,k_-).
 \label{eq:matching-product}
\end{equation}

In temporal order, these vertex weights are
\[
 v_i:=w_{\pi(i)}^{(q)}=\frac{2D_q}{qc_i}\quad(1\le i\le q),
 \qquad
 v_{q+1}:=w_\dagger^{(q)}.
\]
Thus, the path corresponding to the chosen excesses is
\[
 \underbrace{v_1\to\cdots\to v_q}_{
   \text{normalized reciprocals of selected excesses}}
 \longrightarrow
 \underbrace{v_{q+1}=w_\dagger^{(q)}}_{
   \text{auxiliary terminal vertex}}.
\]
We now have the following order-independent lower bound. 

\begin{proposition}
\label{prop:endpoint-matching}
For every $2\le q\le r$, one has
\begin{equation}
        \cC_T(h)
        \ge\frac{q}{2D_q(q-1)\cM_q}.
 \label{eq:order-independent-matching-bound}
\end{equation}
Importantly, this bound is independent of the temporal order of the top $q$
excesses.
\end{proposition}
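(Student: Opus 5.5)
The plan is to bound $\cC_T(h)$ from below by the value of one explicit chain: the one that selects all $q$ top-ranked excesses in their temporal order $t_i=\tau_{\pi(i)}$. The definition \eqref{eq:lower-bound-functional} then gives
\[
 \cC_T(h)^{-1}\le H_q\prod_{i=0}^{q-1}\chi_i^{-1}
 =\Bigl(\prod_{i=1}^{q-1}\chi_{i-1}^{-1}\Bigr)\cdot H_q\chi_{q-1}^{-1}.
\]
Each factor is rewritten with \eqref{eq:local-factor-identities}. For $1\le i<q$, I will use $H_i=U_i+c_{i+1}\ge c_{i+1}$ to obtain
\[
 \chi_{i-1}^{-1}\le U_{i-1}\Bigl(\frac1{c_i}+\frac1{c_{i+1}}+\frac{U_{i-1}}{c_ic_{i+1}}\Bigr).
\]
For the terminal factor I will use $U_{q-1}+H_q\le 2D_q-1$ from the consequence of \eqref{eq:mass-decomposition}, which gives $H_q\chi_{q-1}^{-1}\le U_{q-1}\bigl(1+2D_q/c_q\bigr)$. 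Substituting $1/c_i=qv_i/(2D_q)$ turns every factor into a function of the normalized budget $\rho_i:=qU_{i-1}/D_q$ and the two adjacent vertex weights of the path $v_1\to\cdots\to v_q\to v_{q+1}=w_\dagger^{(q)}$. The terminal factor becomes $U_{q-1}(1+qv_q)$, and I intend to match it with $\psi(v_q,1/(q-1))$ up to the factor $q/(q-1)$ that appears in \eqref{eq:order-independent-matching-bound}.

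The central step, which I expect to be the main obstacle, is budget equalization. The budgets satisfy $\sum_i U_{i-1}\le D_q-1$, that is, $\sum_i\rho_i\le q$ on average. Each nonterminal factor has the form $\rho_i(A_i+\rho_iB_i)$, with $A_i=(v_i+v_{i+1})/2$ and $B_i=v_iv_{i+1}/4$. The target is
\[
 \prod_{i}\rho_i(A_i+\rho_iB_i)\cdot(\text{terminal})\;\le\;2D_q\cdot\frac{q-1}{q}\prod_{\text{edges}}\psi(v_i,v_{i+1}).
\]
This would give \eqref{eq:order-independent-matching-bound} after inversion.

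The observation I will start from is that $\rho\mapsto\log\bigl(\rho(A+\rho B)\bigr)$ is concave. The maximum of the product under the linear budget constraint is therefore characterized by Lagrange conditions. The slack built into $\psi$, whose $uv$ coefficient is $1/2$ rather than the $1/4$ produced by equal budgets, should absorb unequal budgets. Concretely, I would take the tangent bound at a common reference point, $\log(\rho(A+\rho B))\le\log(\rho_0(A+\rho_0B))+\lambda(\rho-\rho_0)$, and choose $\rho_0$ and $\lambda$ uniformly, exploiting $1\le \partial_{\log\rho}\log(\rho(A+\rho B))\le2$. The terminal factor, whose budget effectively counts twice, is handled in the same way. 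The leftover scalar factors $D_q$, $q$ and $q-1$ must then telescope into the prefactor $q/(2D_q(q-1))$. If the uniform tangent argument loses too much, I would instead split the edges by whether $\rho_i\lessgtr 1$ and pair the linear regime against the quadratic regime.

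Once every path edge is bounded by $\psi$ of its endpoint weights, the order-removal step is combinatorial. The path $v_1\to\cdots\to v_{q+1}$ has $q$ edges. Its odd-indexed edges $(v_1v_2),(v_3v_4),\ldots$ and its even-indexed edges $(v_2v_3),\ldots$ are each vertex-disjoint, with sizes $k_+=\lceil q/2\rceil$ and $k_-=\lfloor q/2\rfloor$ in some order. Both are matchings on the labeled multiset $W_q$, because $v_{\pi}$ merely relabels $w_s^{(q)}$. Hence each parity product is at most $P_\psi(W_q,k_\pm)$, and the full product is at most $\cM_q$. Since $\cM_q$ depends only on the multiset $W_q$, the resulting bound is independent of the temporal order, as claimed.
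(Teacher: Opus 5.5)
Your outline follows the paper's proof step for step, except at the budget-equalization step, which you leave open and where the tool you propose would fail.

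\textbf{What matches.} You use the same chain (the top $q$ excesses in temporal order). You use the same factor bounds, via $H_i\ge c_{i+1}$ and $U_{q-1}+H_q<2D_q$. Your normalization is correct: the nonterminal factors are $\rho_i(A_i+\rho_iB_i)$ with $A_i=(v_i+v_{i+1})/2$ and $B_i=v_iv_{i+1}/4$, and the terminal factor is $\rho_q\cdot\frac{2D_q(q-1)}{q}\psi(v_q,v_{q+1})$. The parity split into two matchings is also the paper's. After normalization, the proposition is equivalent to
\[
 \rho_q\prod_{i=1}^{q-1}\rho_i(A_i+\rho_iB_i)\le\prod_{i=1}^{q-1}(A_i+2B_i)
 \qquad\text{for all }\rho_i>0\text{ with }\sum_{i=1}^{q}\rho_i\le q .
\]

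\textbf{Why your mechanism does not deliver this.} Put $f_i(\rho):=\rho(A_i+\rho B_i)$. The bound $\log f_i(\rho)\le\log f_i(\rho_0)+\lambda(\rho-\rho_0)$ holds only when $\lambda=(\log f_i)'(\rho_0)=1/\rho_0+B_i/(A_i+\rho_0B_i)$, and this slope depends on $B_i/A_i$.
\begin{itemize}
\item With a common $\lambda$, the bound is false on one side of $\rho_0$. For example, take $\rho_0=\lambda=1$, $\rho=1+\epsilon$ and $B_i>0$.
\item With edge-dependent slopes, the sum $\sum_i\lambda_i(\rho_i-\rho_0)$ is not controlled by $\sum_i\rho_i\le q$. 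The budget can be shifted toward edges with larger $\lambda_i$.
\item The elasticity bound $1\le\partial_{\log\rho}\log f_i\le2$ alone only gives $f_i(\rho)\le f_i(1)\max\{\rho,\rho^2\}$. On a single pair of edges with budgets $4/3$ and $2/3$ this already loses the factor $(4/3)^2\cdot(2/3)=32/27$. That loss cannot be paid for by $(A_i+2B_i)/(A_i+B_i)$ when $B_i/A_i$ is small.
\end{itemize}
Your fallback of splitting edges by $\rho_i\lessgtr1$ is too vague to check.

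\textbf{The missing idea.} Keep the common slope $\lambda=1$ at $\rho_0=1$, and pay for it in the intercept rather than in the slope. That is, show $f_i(\rho)\le(A_i+2B_i)e^{\rho-1}$ for every $\rho>0$. This follows from the convex-combination identity
\[
 \frac{\rho(A_i+\rho B_i)}{A_i+2B_i}=\frac{A_i}{A_i+2B_i}\,\rho+\frac{2B_i}{A_i+2B_i}\,\frac{\rho^2}{2},
\]
together with $\rho\le e^{\rho-1}$ and $\rho^2/2\le e^{\rho-1}$. The second holds because $\rho^2e^{1-\rho}/2$ is maximized at $\rho=2$, with value $2/e<1$.

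Multiply these bounds over $i<q$, and use $\rho_q\le e^{\rho_q-1}$ for the terminal factor. What remains is the factor $\exp\bigl(\sum_{i=1}^q\rho_i-q\bigr)\le1$. This is exactly where the coefficient $1/2$ on $uv$ in $\psi$, rather than the $1/4$ that equal budgets produce, is used up. It is the content of Lemma~\ref{lem:budget-equalization}. With this inequality in place, the rest of your argument goes through unchanged and coincides with the paper's proof.
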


\subsection{Proof of Proposition~\ref{prop:endpoint-matching}}

We first record the following product inequality, whose proof is deferred to Appendix~\ref{app:budget-equalization}.
\begin{lemma}
\label{lem:budget-equalization}
Let $q\ge1$ and $D>0$, and put $\bar u:=D/q$.  If
$u_1,\ldots,u_q>0$ satisfy $\sum_{i=1}^q u_i\le D$, and
$A_i,B_i>0$ for $1\le i<q$, then
\begin{equation}
 \left(\prod_{i=1}^q u_i\right)
 \prod_{i=1}^{q-1}(A_i+B_iu_i)
 \le
 \bar u^q\prod_{i=1}^{q-1}(A_i+2\bar u B_i).
 \label{eq:budget-product}
\end{equation}
\end{lemma}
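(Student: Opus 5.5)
The plan is to expand the left-hand side of \eqref{eq:budget-product} into a sum of monomials in $u_1,\ldots,u_q$ with nonnegative coefficients. Then I will bound each monomial separately by the matching term on the right-hand side, using weighted AM--GM under the budget $\sum_i u_i\le D$. A direct tangent-line (Lagrangian) bound on the logarithm does not seem to close, because the linear terms from the factors $A_i+B_iu_i$ need not be controlled by the budget. The monomial-by-monomial approach sidesteps this.

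\textbf{Expansion.} For each subset $S\subseteq\{1,\ldots,q-1\}$ with $k:=|S|$, expanding $\prod_{i<q}(A_i+B_iu_i)$ gives
\[
 \left(\prod_{i=1}^q u_i\right)\prod_{i=1}^{q-1}(A_i+B_iu_i)
 =\sum_{S}\Big(\prod_{i\notin S}A_i\prod_{i\in S}B_i\Big)\,
 \prod_{i=1}^q u_i^{\,1+\mathbf 1[i\in S]} .
\]
Likewise,
\[
 \bar u^q\prod_{i<q}(A_i+2\bar uB_i)
 =\sum_S\Big(\prod_{i\notin S}A_i\prod_{i\in S}B_i\Big)\,\bar u^{q}(2\bar u)^{k}.
\]
Since all coefficients are positive, it suffices to show, for every $S$, that
\[
 \prod_{i=1}^q u_i^{e_i}\le 2^k\bar u^{\,q+k},
 \qquad e_i=1+\mathbf 1[i\in S],
 \qquad \sum_i e_i=q+k .
\]

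\textbf{Weighted AM--GM step.} With total exponent $q+k$, weighted AM--GM gives
\[
 \prod_i u_i^{e_i}=\prod_i e_i^{e_i}\prod_i\Big(\frac{u_i}{e_i}\Big)^{e_i}
 \le \prod_i e_i^{e_i}\Big(\frac{\sum_i u_i}{q+k}\Big)^{q+k}
 \le 4^k\Big(\frac{D}{q+k}\Big)^{q+k}.
\]
Here $\prod_i e_i^{e_i}=(2^2)^k$. The budget constraint enters only through $\sum_i u_i\le D$, since the bound is monotone in that sum. Substituting $D=q\bar u$, the required inequality reduces to
\[
 2^k\Big(\frac{q}{q+k}\Big)^{q+k}\le 1,
 \quad\text{i.e.}\quad
 (q+k)\log\!\Big(1+\frac kq\Big)\ge k\log 2 .
\]

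\textbf{Scalar inequality.} Put $x=k/q\in[0,1)$; this uses $k\le q-1$, which is exactly where it matters that $u_q$ has no attached factor. Dividing by $q$, the condition becomes $\phi(x):=(1+x)\log(1+x)\ge x\log2$. The function $\phi$ is convex with $\phi(0)=0$ and $\phi'(0)=1$. Hence $\phi(x)\ge x\ge x\log 2$, which finishes the argument.

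I expect the only real obstacle to be this choice of strategy: recognizing that one should bound term by term after expansion rather than attempt a global equalization. Once the expansion is in place, each step is elementary. The factor $2\bar u$ on the right-hand side is precisely what absorbs the $\prod_i e_i^{e_i}=4^k$ loss from AM--GM.
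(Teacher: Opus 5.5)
Your proof is correct, but it takes a different route from the paper's. The paper never expands the product. With $\widehat u_i:=u_i/\bar u$ and $\beta_i:=\bar uB_i/A_i$, it bounds each normalized factor on its own:
\[
\frac{\widehat u_i(1+\beta_i\widehat u_i)}{1+2\beta_i}
=\frac{1}{1+2\beta_i}\,\widehat u_i+\frac{2\beta_i}{1+2\beta_i}\,\frac{\widehat u_i^{2}}{2}
\le e^{\widehat u_i-1}.
\]
This uses $x\le e^{x-1}$ and $x^2/2\le e^{x-1}$, the latter because $x^2e^{1-x}/2$ peaks at $2/e$. Multiplying then bounds the ratio by $\exp(\sum_i\widehat u_i-q)\le1$.

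So your opening claim is mistaken: a tangent-line bound on the logarithm does close. After dividing by $A_i+2\bar uB_i$, each factor is a convex combination of $\widehat u_i$ and $\widehat u_i^2/2$, uniformly in $\beta_i$, so the linear terms never need separate control.

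Your monomial expansion with weighted AM--GM is still a legitimate alternative. It makes explicit that the $2$ in $2\bar u$ pays for the $\prod_i e_i^{e_i}=4^k$ loss, and it works for any product whose expansion has nonnegative coefficients. The paper's argument is shorter and avoids the $2^{q-1}$-term bookkeeping. The two proofs have the same slack: your reduction in fact gives $2^k(q/(q+k))^{q+k}\le(2/e)^k$, mirroring the paper's $2/e$, and either argument would go through with $2$ replaced by $4/e$.

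Finally, your restriction $x=k/q<1$ is never used. The bound $\phi(x)\ge x\ge x\log 2$ holds for every $x\ge0$, and the lemma, by either proof, stays true even if $u_q$ also carries a factor $A_q+B_qu_q$.
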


\begin{proof}[Proof of Proposition~\ref{prop:endpoint-matching}]
We write $\xi_i:=c_i^{-1}$ for the reciprocal of the $i$th excess in
temporal order.  The identities in
\eqref{eq:mass-decomposition} give
\[
        \sum_{i=0}^{q-1}U_i\le D_q-1<D_q,
        \qquad U_{q-1}+H_q\le2D_q-1<2D_q.
\]
Moreover, $U_i\ge1$ for $0\le i<q$ and $H_q\ge1$ because the schedule is
nonnegative.  Since every $c_i$ is positive, so are all the quantities
$\chi_i$ in \eqref{eq:local-amplitude-bound}.

For $1\le i<q$, the first identity in
\eqref{eq:local-factor-identities}, together with
$H_i=U_i+c_{i+1}\ge c_{i+1}$, gives
\begin{equation}
 \chi_{i-1}^{-1}
 \le U_{i-1}(\xi_i+\xi_{i+1})+U_{i-1}^2\xi_i\xi_{i+1}.
 \label{eq:nonterminal-factor-bound}
\end{equation}
For the terminal factor, the second identity in
\eqref{eq:local-factor-identities} and
$U_{q-1}+H_q<2D_q$ give
\begin{equation}
 H_q\chi_{q-1}^{-1}
 \le U_{q-1}(1+2D_q\xi_q).
 \label{eq:terminal-factor-bound}
\end{equation}

For $1\le i<q$, we set
\[
        S_i:=\xi_i+\xi_{i+1},
        \qquad
        \Xi_i:=\xi_i\xi_{i+1}.
\]
Although the individual gap masses are not controlled separately, their sum
is less than $D_q$.  We multiply \eqref{eq:nonterminal-factor-bound} over
$1\le i<q$ and then use \eqref{eq:terminal-factor-bound} to obtain
\[
 H_q\prod_{j=0}^{q-1}\chi_j^{-1}
 \le(1+2D_q\xi_q)\left(\prod_{j=0}^{q-1}U_j\right)
       \prod_{i=1}^{q-1}(S_i+\Xi_iU_{i-1}).
\]
We can therefore apply Lemma~\ref{lem:budget-equalization} with
$D=D_q$, $u_i=U_{i-1}$, $A_i=S_i$, and $B_i=\Xi_i$, where its hypotheses follow
from $U_i\ge1$, $\sum_{i=0}^{q-1}U_i<D_q$, and $S_i,\Xi_i>0$.  This yields
\begin{equation}
\begin{split}
 H_q\prod_{i=0}^{q-1}\chi_i^{-1}
 &\le
 (1+2D_q\xi_q)
 \left(\frac{D_q}{q}\right)^q
 \prod_{i=1}^{q-1}
 \left(S_i+\frac{2D_q}{q}\Xi_i\right)\\
 &=2D_q\left(\frac{D_q}{q}\right)^q
 \left(\xi_q+\frac1{2D_q}\right)
 \prod_{i=1}^{q-1}
 \left(\xi_i+\xi_{i+1}
       +\frac{2D_q}{q}\xi_i\xi_{i+1}\right).
\end{split}
 \label{eq:factorized-local-bound}
\end{equation}

We now use the normalization $v_i=2D_q\xi_i/q$ to express every factor in
terms of the same function $\psi$.  More precisely,
\begin{equation}
 \frac{D_q}{q}\left(S_i+\frac{2D_q}{q}\Xi_i\right)
 =\psi(v_i,v_{i+1})\quad(1\le i<q),
 \qquad
 2D_q\frac{D_q}{q}\left(\xi_q+\frac1{2D_q}\right)
 =\frac{2D_q(q-1)}{q}\psi(v_q,v_{q+1}).
 \label{eq:terminal-edge-identity}
\end{equation}
Both sides of the second identity equal
$(D_q/q)(1+2D_q\xi_q)$, so adjoining the auxiliary vertex introduces no
further inequality: it merely puts the terminal factor in the same
two-variable form as the internal factors.  The resulting path has labeled
vertex multiset $W_q$, with the selected excesses in their actual temporal
order.  We denote its edge product by
\[
        \mathcal P_{\rm path}
        :=\prod_{i=1}^q\psi(v_i,v_{i+1}).
\]
Taken together, \eqref{eq:factorized-local-bound} and
\eqref{eq:terminal-edge-identity} give
\begin{equation}
 H_q\prod_{i=0}^{q-1}\chi_i^{-1}
 \le\frac{2D_q(q-1)}{q}\mathcal P_{\rm path}.
 \label{eq:augmented-path}
\end{equation}

We next remove the dependence on that order by splitting the path edges
according to parity:
\[
 E_+ \coloneqq \big\{\{v_i,v_{i+1}\}:1\le i\le q,\ i\ \text{odd}\big\},
 \qquad
 E_- \coloneqq  \big\{\{v_i,v_{i+1}\}:1\le i\le q,\ i\ \text{even}\big\}.
\]
No two edges of the same parity share a vertex.  Hence $E_+$ and $E_-$ are
matchings on $W_q$ with $k_+$ and $k_-$ edges, respectively, and the
definition of $P_\psi$ yields
\[
 \mathcal P_{\rm path}
 =\left(\prod_{\{u,v\}\in E_+}\psi(u,v)\right)
  \left(\prod_{\{u,v\}\in E_-}\psi(u,v)\right)
 \le P_\psi(W_q,k_+)P_\psi(W_q,k_-)=\cM_q.
\]
We combine this inequality with \eqref{eq:augmented-path} and take positive
reciprocals to obtain
\[
\begin{split}
 \frac1{H_q}\prod_{i=0}^{q-1}\chi_i
 &=\left(H_q\prod_{i=0}^{q-1}\chi_i^{-1}\right)^{-1}
 \ge\frac{q}{2D_q(q-1)\mathcal P_{\rm path}}
 \ge\frac{q}{2D_q(q-1)\cM_q}.
\end{split}
\]
The left-hand side is the value in \eqref{eq:lower-bound-functional} corresponding to the indices of the $q$ largest excesses. Since $\cC_T(h)$ is the maximum over all admissible choices of selected indices, \eqref{eq:order-independent-matching-bound} follows.
\end{proof}

\subsection{Bounding the matchings by total weight}
\label{subsec:matching-total-weight}

The quantity $\cM_q$ no longer depends on the temporal order of the selected excesses, but it still
depends on each of the first $q$ excesses separately.  We now bound it using only their scaled reciprocal sum.  For $1\le q\le r$,  define
\begin{equation}
 \zeta_q \coloneqq \frac{D_q}{q^2}\sum_{s=1}^q\frac1{a_s}.
 \label{eq:reciprocal-mass}
\end{equation}
Thus, $\zeta_q$ is $D_q/q$ times the mean reciprocal of the first $q$
excesses.  For $2\le q\le r$, we also write
\begin{equation}
        \mu_q:=\cM_q^{1/q}.
 \label{eq:normalized-matching-factor}
\end{equation}
Since the two matching products contain
$k_++k_-=q$ edges in total, $\mu_q$ is their geometric mean per edge.  The
sum of the $q$ excess-dependent vertex weights is
\[
        \sum_{s=1}^q w_s^{(q)}=2q\zeta_q,
\]
and the auxiliary terminal vertex adds $(q-1)^{-1}$.  Thus it remains to bound a matching product in terms of the total weight of its vertices.

\begin{lemma}
\label{lem:total-mass-matching}
Let $W$ be a finite labeled multiset of positive weights, set
$\Sigma:=\sum_{w\in W}w$, and let
$1\le k\le\lfloor |W|/2\rfloor$.  Then
\begin{equation}
 P_\psi(W,k)^{1/k}
 \le \frac{\Sigma}{2k}+\frac{\Sigma^2}{8k^2}.
 \label{eq:total-mass-matching}
\end{equation}
Consequently, for every $2\le q\le r$,
\begin{equation}
 \mu_q
 \le
 \frac{2q\zeta_q+(q-1)^{-1}}{q-1}
 +\frac{\{2q\zeta_q+(q-1)^{-1}\}^2}{2(q-1)^2}.
 \label{eq:scalar-matching-bound}
\end{equation}
\end{lemma}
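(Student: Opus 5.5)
The bound on each matching value comes from AM--GM twice, and the scalar consequence follows by inserting the total weight of $W_q$ and using monotonicity.

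Fix a $k$-edge matching with edges $\{\alpha_j,\beta_j\}$, $1\le j\le k$. Write $s_j:=w_{\alpha_j}+w_{\beta_j}$ for the sum of the two endpoint weights on edge $j$. Since the edges are vertex-disjoint and all weights are positive, $\sum_j s_j\le\Sigma$.

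On each edge, AM--GM gives $w_{\alpha_j}w_{\beta_j}\le s_j^2/4$. Hence
\[
\psi(w_{\alpha_j},w_{\beta_j})\le \phi(s_j),\qquad \phi(s):=\frac{s}{2}+\frac{s^2}{8}.
\]
Next I want $\prod_j\phi(s_j)\le\phi(\bar s)^k$, where $\bar s:=\Sigma/k$ up to the factor below. The function $\log\phi(s)=\log s+\log(1+s/4)-\log 2$ is a sum of concave functions of $s>0$, so it is concave. Jensen's inequality then gives
\[
\prod_j\phi(s_j)\le \phi\Big(\tfrac1k\textstyle\sum_j s_j\Big)^k.
\]
Because $\phi$ is increasing and $\sum_j s_j\le\Sigma$, this is at most $\phi(\Sigma/k)^k$. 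Now
\[
\phi(\Sigma/k)=\frac{\Sigma}{2k}+\frac{\Sigma^2}{8k^2},
\]
and maximizing over matchings gives \eqref{eq:total-mass-matching}. The only point needing care is the concavity of $\log\phi$, which is immediate from the factorization above.

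For \eqref{eq:scalar-matching-bound}, the total weight of $W_q$ is $\Sigma_q=2q\zeta_q+(q-1)^{-1}$. The bound $\Phi(k):=\Sigma_q/(2k)+\Sigma_q^2/(8k^2)$ is decreasing in $k$. Applying the lemma with $k_+$ and $k_-$ gives
\[
\cM_q\le\Phi(k_+)^{k_+}\Phi(k_-)^{k_-}\le\Phi(k_-)^q,
\]
using $k_+\ge k_-\ge1$ for $q\ge2$. It remains to bound $\Phi(k_-)$. When $q\ge 3$, we have $k_-=\lfloor q/2\rfloor\ge (q-1)/2$, so
\[
\Phi(k_-)\le \frac{\Sigma_q}{q-1}+\frac{\Sigma_q^2}{2(q-1)^2}.
\]
When $q=2$, $k_-=1=(q-1)/2\cdot 2\ge(q-1)/2$, so the same inequality holds.

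Taking $q$-th roots gives \eqref{eq:scalar-matching-bound}. The main obstacle is only bookkeeping: the floor $k_-\ge(q-1)/2$ must be checked uniformly in $q$, including the $q=2$ case.
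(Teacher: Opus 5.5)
Your proof is correct. For the first inequality you take a different and shorter route than the paper.

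The paper's argument has three steps:
\begin{enumerate}
\item An exchange inequality shows that some optimal matching pairs the selected weights from the outside inward, $x_i$ with $x_{2k+1-i}$.
\item The reverse Chebyshev inequality then gives $\sum_i x_i x_{2k+1-i}\le \Sigma_E^2/(4k)$.
\item The geometric mean of the edge values is bounded by their arithmetic mean.
\end{enumerate}
On that route the structural step is what makes the arithmetic-mean bound valid. For an arbitrary matching, $\frac1k\sum_j\psi(w_{\alpha_j},w_{\beta_j})$ can exceed $\Sigma/(2k)+\Sigma^2/(8k^2)$. For example, take $k=2$ with one edge of weights $a,a$ and one edge of negligible weights.

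You instead apply AM--GM inside each edge to get $\psi\le\phi(s_j)$, and then use concavity of $\log\phi$. This controls the geometric mean directly, so it holds for every matching. No exchange argument and no identification of the optimal matching are needed. It is the same Jensen-on-a-log-concave-function device the paper uses in its bounded-rank lemma, since $\phi(s)=\varphi(s/2)/2$ with $\varphi(x)=2x+x^2$. The paper's route additionally yields the outside-in structure of optimal matchings, but that fact is not used anywhere else.

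Your derivation of the scalar bound on $\mu_q$ matches the paper's up to bookkeeping: you compare $\Phi(k_+)\le\Phi(k_-)$ first, rather than bounding both by $\Phi((q-1)/2)$. Three cosmetic points:
\begin{enumerate}
\item $\lfloor q/2\rfloor\ge (q-1)/2$ holds for every integer $q$. So your separate $q=2$ case is unnecessary, and its displayed justification is garbled.
\item You should record that $k_+=\lceil q/2\rceil\le\lfloor (q+1)/2\rfloor=\lfloor |W_q|/2\rfloor$, so the first inequality may be applied with $k_+$.
\item The phrase ``$\bar s:=\Sigma/k$ up to the factor below'' should simply be dropped, since Jensen is applied at $\frac1k\sum_j s_j$.
\end{enumerate}
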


\begin{proof}
Choose a matching attaining $P_\psi(W,k)$, and list the weights of its
selected vertices as
$0<x_1\le\cdots\le x_{2k}$.  We first note that these vertices may be
paired from the outside inward.  To see this, we take
$0<a\le b\le c\le d$ and set $Y_s:=1+s$.  Since
$2\psi(s,t)=Y_sY_t-1$, direct subtraction gives
\begin{align*}
 4\{\psi(a,d)\psi(b,c)-\psi(a,c)\psi(b,d)\}
 &=(Y_b-Y_a)(Y_d-Y_c)\ge0,\\
 4\{\psi(a,d)\psi(b,c)-\psi(a,b)\psi(c,d)\}
 &=(Y_c-Y_a)(Y_d-Y_b)\ge0.
\end{align*}
We apply these inequalities with $a=x_1$ and $d=x_{2k}$.  If the corresponding
labeled vertices are not paired with one another, we write $b\le c$ for the
weights of their current partners.  Those two edges contribute either
$\psi(a,b)\psi(c,d)$ or $\psi(a,c)\psi(b,d)$, so the two inequalities show
that replacing them by $\{a,d\}$ and $\{b,c\}$ does not decrease the
product.  The remaining pairs must still be optimal on the remaining
selected vertices.  Repeating the same argument recursively, we obtain a maximizing matching that pairs $x_i$ with $x_{2k+1-i}$ for every $i$.

Set $\Sigma_E \coloneqq \sum_{i=1}^{2k}x_i$.  The sequence $(x_i)_{i=1}^k$ is
nondecreasing, whereas $(x_{2k+1-i})_{i=1}^k$ is nonincreasing.  Hence
\[
 \begin{split}
 &k\sum_{i=1}^k x_ix_{2k+1-i}
 -\left(\sum_{i=1}^k x_i\right)
  \left(\sum_{i=1}^k x_{2k+1-i}\right)=\sum_{1\le i<j\le k}
   (x_i-x_j)(x_{2k+1-i}-x_{2k+1-j})
 \le0.
 \end{split}
\]
Therefore, the reverse Chebyshev inequality and the elementary inequality 
$ab\le(a+b)^2/4$ give
\[
 \sum_{i=1}^k x_ix_{2k+1-i}
 \le\frac1k
 \left(\sum_{i=1}^k x_i\right)
 \left(\sum_{i=k+1}^{2k}x_i\right)
 \le\frac{\Sigma_E^2}{4k}.
\]
The arithmetic--geometric mean inequality for the $k$ edge weights now
yields
\[
\begin{split}
 P_\psi(W,k)^{1/k}
 &\le\frac1k\sum_{i=1}^k\psi(x_i,x_{2k+1-i})=\frac{\Sigma_E}{2k}
   +\frac1{2k}\sum_{i=1}^kx_ix_{2k+1-i}\le\frac{\Sigma_E}{2k}+\frac{\Sigma_E^2}{8k^2}
 \le\frac{\Sigma}{2k}+\frac{\Sigma^2}{8k^2}.
\end{split}
\]
The last inequality uses $\Sigma_E\le\Sigma$ and the fact that
$x\mapsto x/(2k)+x^2/(8k^2)$ is increasing on $[0,\infty)$.
This proves \eqref{eq:total-mass-matching}.

For the particular vertex multiset $W_q$, we write its total weight as
\[
 \Sigma_q
 :=\sum_{w\in W_q}w
 =2q\zeta_q+\frac1{q-1}.
\]
For fixed $\Sigma_q>0$, the right-hand side of
\eqref{eq:total-mass-matching} decreases with $k$.  Since
$k_+,k_-\ge(q-1)/2$, we can apply the preceding bound to each matching to
obtain
\[
 P_\psi(W_q,k_\pm)^{1/k_\pm}
 \le \frac{\Sigma_q}{q-1}+\frac{\Sigma_q^2}{2(q-1)^2}.
\]
Finally, using $k_++k_-=q$, we reach
\[
\begin{split}
 \mu_q
 &=\left(P_\psi(W_q,k_+)^{1/k_+}\right)^{k_+/q}
   \left(P_\psi(W_q,k_-)^{1/k_-}\right)^{k_-/q}\le \frac{\Sigma_q}{q-1}+\frac{\Sigma_q^2}{2(q-1)^2},
\end{split}
\]
which is precisely \eqref{eq:scalar-matching-bound}.
\end{proof}

\section{A cutoff argument and completion of the proof} \label{sec:rank-analysis}

For each rank cutoff $2\le q\le r$, the preceding section gives an order-independent matching bound expressed in terms of $\mu_q$, the geometric mean per edge of the matching product. We compare this quantity with a fixed threshold $\rho<1$. For all sufficiently large ranks $q\ge Q$, this comparison yields two alternatives. If $\mu_q<\rho$, then the matching bound already produces a large contribution to the objective gap. If $\mu_q\ge\rho$, then the smallest excess among the first $q$ ranked excesses must be small relative to the remaining schedule mass. When this second alternative persists over many ranks, a Lyapunov argument controls the cumulative growth of the residual schedule mass. Once the argument has reduced the problem to only a fixed number of remaining ranks, a direct prefix estimate completes the lower bound. The admissible parameter choices lead exactly to the threshold $p_\star=\sqrt{2+\sqrt3}$. Finally, we restore the original scaling and complete the proof of Theorem~\ref{thm:main}.

\subsection{Choice of parameters and the two cutoff alternatives}
\label{subsec:cutoff-alternatives}

Recall that \eqref{eq:order-independent-matching-bound} holds for every
$2\le q\le r$.  In this bound, $D_q$ is the schedule mass left after the
$q$ largest excesses have been selected, while
$\mu_q^{\,q}=\cM_q$ is the corresponding matching product.  We will choose
a rank at which either this product is small or the fact that it is not small
gives control of $D_q$.

Our target is the following schedule-independent lower bound.

\begin{proposition}
\label{prop:normalized-floor}
For every fixed $p\in(p_\star,2)$, there is
$c_p>0$, depending only on $p$, such that every nonnegative normalized stepsize schedule $h$
satisfies
\begin{equation}
        \cC_T(h)
        \ge\frac{c_p}{B(r+1)^{p-1}}.
 \label{eq:normalized-lower-bound}
\end{equation}
\end{proposition}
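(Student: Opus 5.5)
We follow the roadmap of Section~\ref{subsec:proof-strategy}. Fix $p\in(p_\star,2)$ and set $\vartheta:=(p^2-1)^{-1}$. Then $p>p_\star$ gives $2\vartheta+2\vartheta^2<1$, so we can fix $\rho\in(2\vartheta+2\vartheta^2,1)$. Lemma~\ref{lem:total-mass-matching} shows that $\zeta_q\le\vartheta$ implies
\[
\mu_q\le\frac{2q\vartheta+(q-1)^{-1}}{q-1}+\frac{\{2q\vartheta+(q-1)^{-1}\}^2}{2(q-1)^2},
\]
and this tends to $2\vartheta+2\vartheta^2$. Hence there is $Q=Q_p$ such that, for every $q\ge Q$, either $\mu_q<\rho$ or $\zeta_q>\vartheta$. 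If $r<Q$, we use the trivial selection $m=0$, which gives $\cC_T(h)\ge(2D_0)^{-1}$. We also pick one long step at a time to bound $D_0$ against $B$ up to a factor depending only on $Q$. Concretely, selecting the top excess alone gives a chain value of order $1/D_1$ via \eqref{eq:local-factor-identities}, and iterating this handles boundedly many excesses, at the price of a constant depending on $Q$ and $p$.

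For $r\ge Q$, we scan downward from $q=r$. If $\mu_q<\rho$ at some rank $q\ge Q$, Proposition~\ref{prop:endpoint-matching} gives $\cC_T(h)\ge \rho^{-q}/(2D_q)$. It then suffices to know $D_q\le K_pBr^{p-1}q^{1-p}$, which holds by the Lyapunov estimate below applied on the ranks scanned so far, all of which satisfy $\zeta>\vartheta$. Since $\rho^{-q}q^{p-1}$ is bounded below by a positive constant, this yields $\cC_T(h)\gtrsim 1/(B r^{p-1})$. Otherwise $\zeta_k>\vartheta$ holds for every $k\in[Q,r]$, and the Lyapunov bound gives $D_Q\le K_pBr^{p-1}Q^{1-p}$. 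We then finish with the bounded-prefix argument at rank $Q$: either the $m=0$ bound in terms of $D_0$, or a direct chain on the top $Q$ excesses, each costing a $Q$-dependent constant. This gives $\cC_T(h)\ge c_p/(B(r+1)^{p-1})$.

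The main obstacle is the Lyapunov estimate $D_kk^{p-1}\le K_pBr^{p-1}$ on runs where $\zeta_j>\vartheta$. We would derive it from the exact recursion for $\zeta_{q+1}$ in terms of $\zeta_q$ and $\nu_{q+1}=(q+1)a_{q+1}/D_{q+1}$, which follows from $D_q=D_{q+1}+a_{q+1}$ and the definition. Together with $\log(D_{q}/D_{q+1})=\log(1+\nu_{q+1}/(q+1))$, we seek a potential of the form $\Phi_q=\log D_q+(p-1)\log q+\kappa\,g(\zeta_q)$ whose one-step drift is at most $O(q^{-2})$ whenever $\zeta\ge\vartheta$. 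The heuristic fixed point $\zeta=1/(\nu(\nu+2))$ forces $\nu\le p-1$ exactly when $\zeta\ge\vartheta=1/((p-1)(p+1))$. The drift of $\log D$ is then $\nu/q$, and this is offset by the $(p-1)/q$ term plus a correction from the change in $\zeta$. We would first try $g$ linear, choose $\kappa$ by linearizing around $\nu=p-1$, and check the sign using convexity in $\nu$. Telescoping then yields the claimed constant $K_p$. The remaining care is in the error terms and in the boundary $\zeta_q$ near $\vartheta$.
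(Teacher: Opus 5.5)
Your outer argument is the paper's proof: the same $\vartheta=(p^2-1)^{-1}$ and $\rho$, a threshold $Q$ with $\zeta_q\le\vartheta\Rightarrow\mu_q<\rho$, and the largest rank $q\ge Q$ with $\mu_q<\rho$ combined with $\cC_T(h)>\rho^{-q}/(2D_q)$. Otherwise you use the growth bound at rank $Q$ and then a bounded-rank argument. The gap is the step you yourself single out, $D_kk^{p-1}\le K_pBr^{p-1}$, and the potential you propose cannot deliver it. No potential $\Phi_q=\log D_q+(p-1)\log q+G(\zeta_q)$ with $G$ depending on $\zeta$ alone (linear or not) satisfies $\Phi_{n-1}-\Phi_n\le C/n^2$ on admissible configurations. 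Two tests show this.

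\textbf{Test 1.} Take equal excesses at ranks below $n$, so $\zeta_{n-1}=z\ge\vartheta$ is arbitrary, and a tiny excess at rank $n$. The ordering, $\zeta\ge\vartheta$ and $\zeta\nu\le1$ all hold, and $\log(D_{n-1}/D_n)=O(n^{-2})$. But the term $1/(n\nu_n)$ in the recursion makes $\zeta_n=z+\delta$ for any prescribed $\delta>0$. Letting $n\to\infty$ forces $G(z)\le G(z+\delta)$, i.e.\ $G$ nondecreasing.

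\textbf{Test 2.} Take equal excesses at all ranks $\le n$ with $\nu_n=v\in(p-1,p^2-1)$. Then $\zeta_q=1/\nu_q$, so $\zeta_n=\zeta_{n-1}-\Theta(1/n)$, with both above $\vartheta$ for large $n$. For nondecreasing $G$ the drift is then at least $(v-p+1)/n-O(n^{-2})$. So linearizing at $\nu=p-1$ cannot be turned into a per-step estimate with a $\zeta$-only potential.

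\textbf{The missing idea.} Weight the deviation by the current increment. The paper uses $\mathcal L_q=A(\nu_q)(\zeta_q-\vartheta)$ with $A(v)=v/(\vartheta(v+p+1))$. Since $A(v)\to0$ as $v\to0$, the jump $1/(n\nu_n)$ costs only $O(1/n)$. Also $\zeta_q\nu_q\le1$ gives $0\le\mathcal L_q\le1/(\vartheta(p+1))$, so the telescoped boundary term is $O(1)$ even though $\zeta_q$ itself is unbounded. The price is a coefficient comparison across ranks, $A(\nu_{n-1})\ge\omega_n(\nu_n)A(\nu_n)$ with $\omega_n(v)=(n-1)^2/(n(n+v))$. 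This is where the ordering $a_{n-1}\ge a_n$ enters, via $\nu_n\le n\nu_{n-1}/(n-1-\nu_{n-1})$. Together with the exact identity $nA(v)\Delta_n(v)=p-1-v+O(1/n)$, it yields $\mathcal L_n-\mathcal L_{n-1}\le(p-1-\nu_n)/n+C_p/n^2$.

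Your sketch uses only the $\zeta$-recursion and the mass ratio, which hold for arbitrary positive excesses. The cross-rank constraint from the ordering cannot be dropped: alternating tiny and large increments keep $\zeta_q>\vartheta$ and $\zeta_q\nu_q\le1$ while $\nu$ averages well above $p-1$.

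\textbf{Two smaller repairs.}
\begin{enumerate}
\item At the first $q$ with $\mu_q<\rho$, the run $\zeta>\vartheta$ covers only ranks $q+1,\ldots,r$. You therefore also need $D_q/D_{q+1}=1+\nu_{q+1}/(q+1)<2$, which requires $Q>\vartheta^{-1}$.
\item In the bounded-rank step, the single top excess has value $a_1/(U_0(U_0+a_1+H_1))$, which is not of order $1/D_1$ when $a_1\ll D_1$. Also $D_0$ is not bounded by any multiple of $B$. What works is the paper's dichotomy: if $ja_j<D_j$ for all $j\le Q$, use the empty chain. Otherwise use the chronologically ordered top-$q$ prefix for the largest $q$ with $qa_q\ge D_q$.
\end{enumerate}
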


For the rest of the normalized argument, we fix $p\in(p_\star,2)$ and define
\begin{equation}
        \vartheta:=\frac1{p^2-1}.
 \label{eq:parameter-choice}
\end{equation}
Then we have
\[
\begin{aligned}
 2\vartheta+2\vartheta^2<1
 \quad\Longleftrightarrow\quad p^2>2+\sqrt3.
\end{aligned}
\]
Thus, the strict inequality $p>p_\star$ allows us to choose
$\rho\in(0,1)$ so that
\begin{equation}
        2\vartheta+2\vartheta^2<\rho<1.
 \label{eq:rho-choice}
\end{equation}
At $p=p_\star$, the left endpoint equals one, so no such $\rho$ exists.
This explains why the argument requires the strict inequality
$p>p_\star$.

We next record how much the residual schedule mass changes when we move the cutoff by one
rank.  For $1\le q\le r$, we define
\begin{equation}
        \nu_q:=\frac{qa_q}{D_q}.
 \label{eq:relative-mass-increment}
\end{equation}
Because $D_{q-1}=D_q+a_q$, the quotient $\nu_q/q$ is the relative
increase of $D_q$ when the cutoff moves from $q$ to $q-1$.  This quantity
is related to the matching quantity $\zeta_q$ from
\eqref{eq:reciprocal-mass}.  Indeed, since $a_s\ge a_q$ for $s\le q$,
\begin{equation}
 \zeta_q\nu_q
 =\frac{a_q}{q}\sum_{s=1}^q\frac1{a_s}
 \le1.
 \label{eq:zeta-density-bound}
\end{equation}

If $\zeta_q\le\vartheta$, then
\[
 \frac{2q\zeta_q+(q-1)^{-1}}{q-1}
 \le 2\vartheta+\frac{2\vartheta}{q-1}
      +\frac1{(q-1)^2}
 \le 2\vartheta+\frac{2\vartheta+1}{q-1},
\]
where the last inequality uses $q\ge2$.  Since $x\mapsto x+x^2/2$ is
increasing on $[0,\infty)$, \eqref{eq:scalar-matching-bound} gives
\[
 \mu_q\le
 2\vartheta+\frac{2\vartheta+1}{q-1}
 +\frac12\left(2\vartheta+
          \frac{2\vartheta+1}{q-1}\right)^2.
\]
As $q\to\infty$, the right-hand side decreases to
$2\vartheta+2\vartheta^2<\rho$.  We therefore choose an integer $Q\ge2$
large enough such that
\[
 Q>\vartheta^{-1},
 \qquad
 2\vartheta+\frac{2\vartheta+1}{Q-1}
 +\frac12\left(2\vartheta+
          \frac{2\vartheta+1}{Q-1}\right)^2<\rho.
\]
With this choice, uniformly for every $Q\le q\le r$, one has
\begin{equation}
        \zeta_q\le\vartheta\quad\Longrightarrow\quad \mu_q<\rho.
 \label{eq:uniform-matching-bound}
\end{equation}
Both $\rho$ and $Q$ depend only on $p$ and will remain fixed.

For each $Q\le q\le r$, we now consider the two cases
$\mu_q<\rho$ and $\mu_q\ge\rho$.  If $\mu_q<\rho$, then combining
\eqref{eq:order-independent-matching-bound} with $q/(q-1)>1$ gives
\begin{equation}
 \mu_q<\rho
 \quad\Longrightarrow\quad
 \cC_T(h)>\frac{\rho^{-q}}{2D_q}.
 \label{eq:small-matching-product}
\end{equation}
On the other hand, if $\mu_q\ge\rho$, we first apply the contrapositive of
\eqref{eq:uniform-matching-bound} and then use
\eqref{eq:zeta-density-bound} to obtain
\begin{equation}
 \mu_q\ge\rho
 \quad\Longrightarrow\quad
 \zeta_q>\vartheta,
 \qquad 0<\nu_q<\vartheta^{-1}.
 \label{eq:large-matching-product}
\end{equation}
In the proof below, we take the largest $q$ for which the first alternative
holds.  At all larger ranks the second alternative holds.  If there is no
such $q$, the second alternative holds throughout $\{Q,\ldots,r\}$.

\subsection{Growth of the residual schedule mass and the bounded-rank case}
\label{subsec:mass-growth}

The pointwise bounds in \eqref{eq:large-matching-product} show that, under
the large-product alternative, each individual rank can increase the
residual schedule mass only by a controlled relative amount.  However, these local
bounds do not by themselves control the cumulative growth of $D_q$ across
many ranks.  We therefore pass from pointwise control to an aggregate
estimate.  First, we record the exact relations between adjacent ranks; then
we introduce a Lyapunov potential tailored to the
critical exponent $p-1$.

\subsubsection{Adjacent-rank dynamics}
\label{subsubsec:adjacent-rank-dynamics}

The following lemma collects the exact relations between
successive ranks used in the Lyapunov analysis; its proof is deferred to Appendix~\ref{app:adjacent-rank-relations}.

\begin{lemma}
\label{lem:adjacent-rank-relations}
For every $1\le q\le r$,
\begin{equation}
 \frac{D_{q-1}}{D_q}=1+\frac{\nu_q}{q}.
 \label{eq:one-rank-mass-ratio}
\end{equation}
Consequently, for $0\le k\le r$,
\begin{equation}
 \frac{D_k}{B}
 =\prod_{s=k+1}^r\left(1+\frac{\nu_s}{s}\right).
 \label{eq:mass-product}
\end{equation}
For $1\le q<r$, monotonicity of the ranked excesses implies
\begin{equation}
 \frac{\nu_q}{q}
 \left(1+\frac{\nu_{q+1}}{q+1}\right)
 \ge\frac{\nu_{q+1}}{q+1},
 \label{eq:mass-increment-transition}
\end{equation}
and hence, whenever $q>\nu_q$,
\begin{equation}
        \nu_{q+1}
        \le\frac{(q+1)\nu_q}{q-\nu_q}.
 \label{eq:mass-increment-bound}
\end{equation}
The matching quantity satisfies the exact recursion
\begin{equation}
 \zeta_{q+1}
 =\frac{q^2\zeta_q}{(q+1)(q+1+\nu_{q+1})}
  +\frac1{\nu_{q+1}(q+1)}
 \qquad(1\le q<r).
 \label{eq:zeta-recursion}
\end{equation}
\end{lemma}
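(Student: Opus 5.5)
All five relations in Lemma~\ref{lem:adjacent-rank-relations} follow from the definitions \eqref{eq:rank-tail}, \eqref{eq:reciprocal-mass}, and \eqref{eq:relative-mass-increment} by direct algebra. The plan is to verify them in the order stated.

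\textbf{Mass ratio and mass product.} By \eqref{eq:rank-tail}, $D_{q-1}=D_q+a_q$. Dividing by $D_q$ and using $a_q/D_q=\nu_q/q$ gives \eqref{eq:one-rank-mass-ratio}. Telescoping this identity from $s=k+1$ to $r$ and using $D_r=B$ gives \eqref{eq:mass-product}; for $k=r$ the product is empty and equals one.

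\textbf{Transition inequality.} This is the only step that uses the ordering of the excesses. Since $a_q\ge a_{q+1}$, we have
\[
\frac{\nu_q}{q}=\frac{a_q}{D_q}\ge\frac{a_{q+1}}{D_q}.
\]
Next, $D_q=D_{q+1}+a_{q+1}$, so
\[
\frac{a_{q+1}}{D_q}=\frac{a_{q+1}/D_{q+1}}{1+a_{q+1}/D_{q+1}}=\frac{\nu_{q+1}/(q+1)}{1+\nu_{q+1}/(q+1)}.
\]
Multiplying through by the positive factor $1+\nu_{q+1}/(q+1)$ yields \eqref{eq:mass-increment-transition}. To get \eqref{eq:mass-increment-bound}, write $x=\nu_{q+1}/(q+1)$ and rearrange \eqref{eq:mass-increment-transition} as $x(1-\nu_q/q)\le\nu_q/q$. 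When $q>\nu_q$, the coefficient $1-\nu_q/q$ is positive, so we can divide to obtain $x\le\nu_q/(q-\nu_q)$, which is \eqref{eq:mass-increment-bound}.

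\textbf{Recursion for $\zeta$.} Let $R_q=\sum_{s\le q}a_s^{-1}$. Then
\[
\zeta_{q+1}=\frac{D_{q+1}}{(q+1)^2}\bigl(R_q+a_{q+1}^{-1}\bigr).
\]
The second term equals
\[
\frac{D_{q+1}}{(q+1)^2a_{q+1}}=\frac{1}{(q+1)\nu_{q+1}}.
\]
For the first term, use $R_q=q^2\zeta_q/D_q$ together with $D_q/D_{q+1}=1+\nu_{q+1}/(q+1)$, which is \eqref{eq:one-rank-mass-ratio} at rank $q+1$. This gives
\[
\frac{D_{q+1}}{(q+1)^2}R_q=\frac{q^2\zeta_q}{(q+1)^2\bigl(1+\nu_{q+1}/(q+1)\bigr)}=\frac{q^2\zeta_q}{(q+1)(q+1+\nu_{q+1})}.
\]
Adding the two terms gives \eqref{eq:zeta-recursion}.

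No step here is a real obstacle. The only points needing care are keeping the rank indices straight, applying the ratio identity at rank $q+1$ rather than $q$, and dividing only after checking that $q>\nu_q$ makes the coefficient positive.
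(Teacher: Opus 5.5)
Your proposal is correct and follows the paper's proof essentially step for step. Like the paper, you get the ratio identity from $D_{q-1}=D_q+a_q$ and telescope to $D_r=B$. You divide $a_q\ge a_{q+1}$ by $D_q=D_{q+1}(1+\nu_{q+1}/(q+1))$ and rearrange, dividing only when $q>\nu_q$. You then split $\sum_{s\le q+1}a_s^{-1}$ into $q^2\zeta_q/D_q$ plus the new reciprocal term to obtain the $\zeta$ recursion.
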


\subsubsection{A one-step Lyapunov estimate}
\label{subsubsec:one-step-lyapunov}

To sum the changes in $D_q$, we need to control
$\sum_q \nu_q/q$.  The affine recursion \eqref{eq:zeta-recursion}
suggests the Lyapunov potential
\begin{align}
 \mathcal L_q
 \coloneqq \frac{\nu_q(\zeta_q-\vartheta)}
         {\vartheta(\nu_q+p+1)}.
\end{align}
The coefficient of $\zeta_q-\vartheta$ is chosen so that the contraction in
\eqref{eq:zeta-recursion} is absorbed by the preceding value of the
potential.  The resulting drift estimate bounds the relative mass increase,
up to a summable quadratic error.

\begin{lemma}
\label{lem:one-step-lyapunov}
There is a constant $C_p\ge1$, depending only on $p$, with the following
property.  Let $n\ge2$ be an integer satisfying
$n-1>\vartheta^{-1}$.  Suppose
\[
 \zeta_{n-1}\ge\vartheta,
 \qquad
 0<\nu_{n-1},\nu_n\le\vartheta^{-1},
\]
and suppose
\[
 \nu_n\le
 \frac{n\nu_{n-1}}{n-1-\nu_{n-1}},
 \qquad
 \zeta_n=
 \frac{(n-1)^2\zeta_{n-1}}{n(n+\nu_n)}
 +\frac1{n\nu_n}.
\]
Then
\begin{equation}
 \mathcal L_n-\mathcal L_{n-1}
 \le\frac{p-1-\nu_n}{n}
      +\frac{C_p}{n^2}.
 \label{eq:one-step-lyapunov}
\end{equation}
\end{lemma}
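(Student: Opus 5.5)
The plan is a direct computation: expand $\mathcal L_n$ by substituting the recursion for $\zeta_n$, then show that it splits into a transported copy of $\mathcal L_{n-1}$ plus an explicit drift term that is exactly $(p-1-\nu_n)/n$. Write $\nu:=\nu_n$, $\nu':=\nu_{n-1}$, $\delta:=\zeta_{n-1}-\vartheta\ge0$, and $g(x):=x/(x+p+1)$, which is increasing on $[0,\infty)$. Then $\mathcal L_{n-1}=g(\nu')\delta/\vartheta$ and $\mathcal L_n=g(\nu)(\zeta_n-\vartheta)/\vartheta$.

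\textbf{Step 1 (expansion).} Substituting $\zeta_{n-1}=\vartheta+\delta$ into the recursion gives
\[
\zeta_n-\vartheta=\kappa\,\delta+\vartheta(\kappa-1)+\frac1{n\nu},
\qquad
\kappa:=\frac{(n-1)^2}{n(n+\nu)}.
\]
Since $\nu\le\vartheta^{-1}$ and $n-1>\vartheta^{-1}$, we have
\[
\kappa=1-\frac{2+\nu}{n}+O_p(n^{-2}),
\qquad
0<\kappa\le1-\frac{2+\nu}{n}+\frac{C}{n^2}.
\]

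\textbf{Step 2 (exact drift).} Replace $\vartheta(\kappa-1)$ by $-\vartheta(2+\nu)/n$, paying an error $O_p(n^{-2})$, and multiply by $g(\nu)/\vartheta\le\vartheta^{-1}$, which is bounded. The $\delta$-free part then becomes
\[
\frac{\nu\,(\vartheta^{-1}-2\nu-\nu^2)}{n\,\nu\,(\nu+p+1)}.
\]
With $\vartheta^{-1}=p^2-1$ we have the factorization
\[
p^2-1-2\nu-\nu^2=(p-1-\nu)(p+1+\nu),
\]
so this part equals exactly $(p-1-\nu)/n$. This is the point of the choice of the denominator $\nu+p+1$ in $\mathcal L_q$.

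\textbf{Step 3 (transport of the old potential).} It remains to show
\[
\frac{\delta}{\vartheta}\Bigl[g(\nu)\kappa-g(\nu')\Bigr]\le\frac{C_p}{n^2}.
\]
Two a priori bounds are needed.
\begin{itemize}
\item From \eqref{eq:zeta-density-bound}, $\delta\le\zeta_{n-1}\le1/\nu'$, so $\nu'\delta\le1$.
\item From the hypothesis, $\nu\le\nu'+\nu'(1+\nu')/(n-1-\nu')$, so $\nu-\nu'=O_p(\nu'/n)$ and $\nu=O_p(\nu')$.
\end{itemize}
Using that $g$ is increasing and concave with $g'(x)=(p+1)/(x+p+1)^2$, we get
\[
g(\nu)\kappa-g(\nu')\le \frac{\nu'}{n}\left[\frac{(p+1)(1+\nu')}{(\nu'+p+1)^2}-\frac{2+\nu'}{\nu'+p+1}\right]+O_p\!\left(\frac{\nu'}{n^2}\right).
\]
The bracket is nonpositive, because
\[
(p+1)(1+\nu')\le(2+\nu')(\nu'+p+1),
\]
which after expansion is a slack of $\nu'^2+\nu'+p+1>0$. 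Care is needed here: $g(\nu)(2+\nu)$ must be compared with $g(\nu')(2+\nu')$, and I replace one by the other at cost $O(\nu'/n)$. That cost gets multiplied by $1/n$, so it is absorbed into $O(\nu'/n^2)$. Multiplying by $\delta/\vartheta$ and using $\nu'\delta\le1$ gives the required $C_p/n^2$.

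The main obstacle is Step 3. One must check that every error term carries a factor $\nu'$ (or $\nu$), so that it can be paid for by $\nu'\delta\le1$, even when $\nu'$ is tiny and $\delta$ is large. All hidden constants must also be uniform, given $\nu,\nu'\le\vartheta^{-1}$ and $n-1-\nu'\ge n-1-\vartheta^{-1}>0$. I would track the exact rational expressions rather than Taylor expansions to guarantee this uniformity. Finally, I would take $C_p\ge1$ as the maximum of the constants from Steps 2 and 3.
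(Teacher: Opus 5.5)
Your Steps 1--2 match the paper's drift calculation exactly. The $\delta$-free part is $\frac{g(\nu)}{\vartheta}\bigl(\frac{1}{n\nu}+\vartheta(\kappa-1)\bigr)=\frac1n\bigl[p-1-\nu+\frac{\nu(1+\nu)^2}{(n+\nu)(\nu+p+1)}\bigr]$, which is \eqref{eq:lyapunov-boundary} because $1+\nu(\nu+2)=(1+\nu)^2$.

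Step 3, however, has a gap as written. The hypothesis $\nu\le n\nu'/(n-1-\nu')$ bounds $\nu-\nu'$ only from \emph{above}, so your claim $\nu-\nu'=O_p(\nu'/n)$ holds only one-sidedly. Nothing prevents $\nu\ll\nu'$, for instance $\nu'=1$ and $\nu=10^{-2}$; actual schedules produce this when the excesses drop sharply. Since $g(\nu)(2+\nu)$ enters your expansion with a minus sign, you need a \emph{lower} bound on it. In that regime $g(\nu')(2+\nu')-g(\nu)(2+\nu)$ is of order one, not $O(\nu'/n)$, so the claimed replacement cost is false.

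The repair is a case split:
\begin{itemize}
\item If $\nu\le\nu'$, then $g(\nu)\kappa\le g(\nu)\le g(\nu')$ because $0<\kappa\le1$, so the bracket is nonpositive.
\item If $\nu>\nu'$, then $g(\nu)(2+\nu)\ge g(\nu')(2+\nu')$ by monotonicity. The replacement then costs nothing, and your expansion goes through.
\end{itemize}
A minor slip: the slack is $(2+\nu')(\nu'+p+1)-(p+1)(1+\nu')=\nu'^2+2\nu'+p+1$, not $\nu'^2+\nu'+p+1$; this is harmless since it is still positive.

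With that repair your route is valid, but it is heavier than the paper's. The paper rewrites the hypothesis as $\nu'\ge u_{\min}:=(n-1)\nu/(n+\nu)$. It then uses that $g$ is increasing while $g(x)/x=1/(x+p+1)$ is decreasing, which gives exactly
\[
g(\nu')\ge g(u_{\min})\ge\frac{u_{\min}}{\nu}g(\nu)=\frac{n-1}{n+\nu}g(\nu)\ge\kappa\, g(\nu).
\]
So the coefficient of $\delta$ is nonpositive with no error term. The only $C_p/n^2$ contribution comes from the drift, and no bound on $\delta$ is ever needed.

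Your expansion instead leaves an $O(\nu'/n^2)$ coefficient on $\delta$. That forces you to import $\nu_{n-1}\zeta_{n-1}\le1$. This bound holds for the schedule quantities by \eqref{eq:zeta-density-bound}, and it is recorded in \eqref{eq:cutoff-conditions} where the lemma is applied. It is not, however, among the lemma's stated hypotheses, so your version needs one more assumption than the lemma lists.
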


The proof consists of a coefficient comparison followed by a direct
calculation of the drift; we defer it to
Appendix~\ref{app:one-step-lyapunov-proof}.  Telescoping this inequality
then leads to the required bound on $D_q$.

\begin{lemma}
\label{lem:boundary-propagation}
There is a constant $K_p\ge1$, depending only on $p$, such that, whenever
$Q\le k\le r$ and $\mu_q\ge\rho$ for every
$q\in\{k+1,\ldots,r\}$,
\begin{equation}
        D_k k^{p-1}
        \le K_pB r^{p-1}.
 \label{eq:boundary-propagation}
\end{equation}
\end{lemma}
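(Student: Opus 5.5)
The plan is to telescope Lemma~\ref{lem:one-step-lyapunov} over $n=k+1,\ldots,r$ and convert the resulting bound on $\sum_n \nu_n/n$ into a bound on $\log(D_k/B)$ through \eqref{eq:mass-product}. The conclusion \eqref{eq:boundary-propagation} is equivalent to
\[
\log\frac{D_k}{B}\le \log K_p+(p-1)\log\frac rk .
\]
Since $\log(1+x)\le x$, it suffices to show
\[
\sum_{n=k+1}^r \frac{\nu_n}{n}\le (p-1)\sum_{n=k+1}^r\frac1n+O_p(1),
\]
because $\sum_{n=k+1}^r 1/n\le \log(r/k)$.

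First I check the hypotheses of Lemma~\ref{lem:one-step-lyapunov} for each $n\in\{k+2,\ldots,r\}$.
\begin{itemize}
\item By \eqref{eq:large-matching-product}, $\mu_q\ge\rho$ for all $q\in\{k+1,\ldots,r\}$ gives $\zeta_q>\vartheta$ and $0<\nu_q<\vartheta^{-1}$ on this range. So $\zeta_{n-1}\ge\vartheta$ and $\nu_{n-1},\nu_n\le\vartheta^{-1}$ hold.
\item Since $Q>\vartheta^{-1}$ and $n-1\ge k+1>Q$, we have $n-1>\vartheta^{-1}\ge\nu_{n-1}$. Hence \eqref{eq:mass-increment-bound} applies with $q=n-1$.
\item The recursion \eqref{eq:zeta-recursion} gives the required exact formula for $\zeta_n$.
\end{itemize}
The first index $n=k+1$ would need information at rank $k$, where $\mu_k\ge\rho$ may fail. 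I therefore handle the single term $\nu_{k+1}/(k+1)\le \vartheta^{-1}/(k+1)\le\vartheta^{-1}$ separately as an $O_p(1)$ contribution.

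Summing \eqref{eq:one-step-lyapunov} over $n=k+2,\ldots,r$ gives
\[
\sum_{n=k+2}^r\frac{\nu_n}{n}\le (p-1)\sum_{n=k+2}^r\frac1n+\mathcal L_{k+1}-\mathcal L_r+C_p\sum_{n\ge2}\frac1{n^2}.
\]
It remains to bound the boundary terms $\mathcal L_{k+1}$ and $-\mathcal L_r$ by constants depending only on $p$.
\begin{itemize}
\item On the range $q\in\{k+1,\ldots,r\}$ we have $\zeta_q>\vartheta$ and $\nu_q>0$, so $\mathcal L_q\ge 0$ and hence $-\mathcal L_r\le0$.
\item For the upper bound, \eqref{eq:zeta-density-bound} gives $\nu_q\zeta_q\le1$. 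Therefore
\[
\mathcal L_{k+1}\le \frac{\nu_{k+1}\zeta_{k+1}}{\vartheta(p+1)}\le\frac{1}{\vartheta(p+1)}.
\]
\end{itemize}
Combining everything gives $\log(D_k/B)\le (p-1)\log(r/k)+C'_p$, and exponentiating yields \eqref{eq:boundary-propagation} with $K_p=e^{C'_p}$.

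The main obstacle is the bookkeeping at the boundary, not the inequality itself. Two points need care. First, I must confirm that every index to which Lemma~\ref{lem:one-step-lyapunov} is applied has both $n-1$ and $n$ inside the range where $\mu\ge\rho$ is known. Second, the degenerate cases $k=r$ (empty product, so $D_r=B$ and the bound is trivial) and $k=r-1$ must be covered by the separately bounded first term. I also need to confirm that $\mathcal L$ is bounded using only $\nu\zeta\le1$, without any lower bound on $\nu$, which is what makes the telescoped boundary contribution uniform in the schedule.
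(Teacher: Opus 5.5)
Your proposal is correct and takes essentially the same approach as the paper. Both telescope Lemma~\ref{lem:one-step-lyapunov} over $n=k+2,\ldots,r$, bound the boundary potential through $\nu_q\zeta_q\le1$, and treat the transition from rank $k+1$ to rank $k$ separately using $\nu_{k+1}<\vartheta^{-1}$. The only difference is packaging: the paper first proves the estimate at $\ell=k+1$ and then applies the multiplicative bound $D_k/D_{k+1}<2$, whereas you add $\nu_{k+1}/(k+1)$ directly to the logarithmic sum.
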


\begin{proof}[Proof of Lemma~\ref{lem:boundary-propagation}]
We take $C_p\ge1$ to be the constant from
Lemma~\ref{lem:one-step-lyapunov} and define
\[
 K_p:=2\exp\!\left\{
   \frac1{\vartheta(p+1)}+C_p\frac{\pi^2}{6}
 \right\}.
\]
This constant depends only on $p$.

\medskip\noindent\textbf{Estimate on an interval of ranks.}
We first show that, whenever $Q\le\ell\le r$ and
$\mu_q\ge\rho$ for every $q\in\{\ell,\ldots,r\}$,
\begin{equation}
        D_\ell\ell^{p-1}
        \le \frac{K_p}{2}B r^{p-1}.
 \label{eq:core-propagation}
\end{equation}
If $\ell=r$, this follows from $D_r=B$ and $K_p/2\ge1$.  We now assume
$\ell<r$.  Applying \eqref{eq:large-matching-product} and
\eqref{eq:zeta-density-bound} at each of these ranks gives
\begin{equation}
 \zeta_q>\vartheta,
 \qquad
 0<\nu_q<\vartheta^{-1},
 \qquad
 \zeta_q\nu_q\le1
 \quad(\ell\le q\le r).
 \label{eq:cutoff-conditions}
\end{equation}
Moreover, for every $q\in\{\ell,\ldots,r-1\}$,
$q\ge Q>\vartheta^{-1}>\nu_q$, so \eqref{eq:mass-increment-bound} applies to
every transition used below.

Specifically, we fix $n\in\{\ell+1,\ldots,r\}$ and apply
\eqref{eq:mass-increment-bound} and \eqref{eq:zeta-recursion} with $q=n-1$.
They give
\[
 \nu_n\le
 \frac{n\nu_{n-1}}{n-1-\nu_{n-1}},
 \qquad
 \zeta_n=
 \frac{(n-1)^2\zeta_{n-1}}{n(n+\nu_n)}
 +\frac1{n\nu_n}.
\]
Together with \eqref{eq:cutoff-conditions}, these are precisely the hypotheses of
Lemma~\ref{lem:one-step-lyapunov}.  Hence that lemma is applicable for every
$n\in\{\ell+1,\ldots,r\}$.

In view of 
\eqref{eq:cutoff-conditions}, the potential $\mathcal L_q$ is uniformly bounded as follows: 
\begin{equation}
 0\le\mathcal L_q
 \le\frac{\nu_q\zeta_q}
          {\vartheta(\nu_q+p+1)}
 \le\frac1{\vartheta(\nu_q+p+1)}
 \le\frac1{\vartheta(p+1)}.
 \label{eq:lyapunov-bound}
\end{equation}
In particular, the endpoint contribution satisfies
\[
        \mathcal L_\ell-\mathcal L_r
        \le\mathcal L_\ell
        \le\frac1{\vartheta(p+1)}.
\]
For each $n\in\{\ell+1,\ldots,r\}$, we rearrange
\eqref{eq:one-step-lyapunov} to obtain
\[
 \frac{\nu_n}{n}
 \le\frac{p-1}{n}+\frac{C_p}{n^2}
   +\mathcal L_{n-1}-\mathcal L_n.
\]
We now sum these inequalities and telescope the potential terms to obtain
\[
\begin{split}
 \sum_{n=\ell+1}^r\frac{\nu_n}{n}
 &\le
 (p-1)\sum_{n=\ell+1}^r\frac1n
 +C_p\sum_{n=\ell+1}^r\frac1{n^2}
 +\sum_{n=\ell+1}^r
   \bigl(\mathcal L_{n-1}-\mathcal L_n\bigr)\\
 &=
 (p-1)\sum_{n=\ell+1}^r\frac1n
 +C_p\sum_{n=\ell+1}^r\frac1{n^2}
 +\mathcal L_\ell-\mathcal L_r\\
 &\le
 (p-1)\log\frac r\ell
 +\frac1{\vartheta(p+1)}
 +C_p\frac{\pi^2}{6}.
\end{split}
\]
Here, we have used \eqref{eq:lyapunov-bound},
$\sum_{n=\ell+1}^r n^{-1}\le\log(r/\ell)$, and
$\sum_{n=1}^\infty n^{-2}=\pi^2/6$.
Finally, we set $k=\ell$ in \eqref{eq:mass-product}, take logarithms, and use
$\log(1+x)\le x$ to derive
\[
\begin{split}
 \log\frac{D_\ell}{B}
 &=\sum_{n=\ell+1}^r
   \log\left(1+\frac{\nu_n}{n}\right)\le\sum_{n=\ell+1}^r\frac{\nu_n}{n}\le(p-1)\log\frac r\ell
 +\frac1{\vartheta(p+1)}
 +C_p\frac{\pi^2}{6}.
\end{split}
\]
We then exponentiate this inequality and multiply by $B\ell^{p-1}$ to reach
\[
 D_\ell\ell^{p-1}
 \le
 \exp\!\left\{
   \frac1{\vartheta(p+1)}+C_p\frac{\pi^2}{6}
 \right\}Br^{p-1}
 =\frac{K_p}{2}Br^{p-1},
\]
which proves \eqref{eq:core-propagation}.

\medskip\noindent\textbf{The remaining boundary rank.}
If $k=r$, then
\[
        D_k k^{p-1}=D_r r^{p-1}=B r^{p-1}
        \le K_p B r^{p-1}.
\]
We now suppose $k<r$.  Since $\mu_q\ge\rho$ for every
$q\in\{k+1,\ldots,r\}$, \eqref{eq:core-propagation} at $\ell=k+1$
gives
\[
        D_{k+1}(k+1)^{p-1}
        \le \frac{K_p}{2}B r^{p-1}.
\]
Moreover, $\mu_{k+1}\ge\rho$, and hence  \eqref{eq:large-matching-product} gives
$\nu_{k+1}<\vartheta^{-1}$.  Therefore, it holds that
\[
 \frac{D_k}{D_{k+1}}
 =1+\frac{\nu_{k+1}}{k+1}
 <1+\frac{\vartheta^{-1}}{k+1}
 \le1+\frac{\vartheta^{-1}}Q
 <2,
\]
where the last inequality uses $Q>\vartheta^{-1}$.  Since
$k^{p-1}\le(k+1)^{p-1}$, we can conclude that
\[
 D_k k^{p-1}
 <2D_{k+1}(k+1)^{p-1}
 \le K_pB r^{p-1},
\]
thereby establishing \eqref{eq:boundary-propagation}.
\end{proof}

\subsubsection{The bounded-rank case}

The preceding estimate controls the residual schedule mass down to any fixed rank. It remains to extract a lower bound using only a fixed number of the largest excesses. The key point is that, among the empty chain and the prefix chains formed from these excesses, at least one achieves a value comparable to the reciprocal of the remaining mass. More concretely, either the empty chain already suffices, or one of the nonempty prefixes has well-controlled local factors.

\begin{lemma} \label{lem:bounded-rank} 
For every fixed integer $Q\ge1$, there exists a constant $c_Q>0$, depending only on $Q$, such that, for every $0\le q_0\le\min\{Q,r\}$, \begin{equation} \cC_T(h)\ge\frac{c_Q}{D_{q_0}}. \label{eq:bounded-rank-estimate} \end{equation} \end{lemma}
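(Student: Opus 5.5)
The plan is to restrict attention to \emph{prefix chains}: for $0\le j\le q_0$, select the indices $\tau_1,\dots,\tau_j$ of the $j$ largest excesses, restored to temporal order as in Section~\ref{sec:chronology}. We choose $j$ so that every selected excess is at least as large as the residual mass $D_j$. All local factors are then bounded by absolute constants, and the decrease from $D_j$ to $D_{q_0}$ only costs a factor depending on $Q$.

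\textbf{Choice of $j$.} Let $j$ be the largest index in $\{1,\dots,q_0\}$ with $a_j\ge D_j$, and set $j=0$ if no such index exists. For every $s\in\{j+1,\dots,q_0\}$ we have $a_s<D_s$, so Lemma~\ref{lem:adjacent-rank-relations} (or directly $D_{s-1}=D_s+a_s$) gives $D_{s-1}<2D_s$. Iterating,
\[
D_j\le 2^{\,q_0-j}D_{q_0}\le 2^{Q}D_{q_0}.
\]
Moreover, since the ranked excesses are nonincreasing, every selected excess satisfies $c_i\ge a_j\ge D_j$, that is, $\xi_i=c_i^{-1}\le D_j^{-1}$.

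\textbf{Case $j=0$.} The empty selection contributes $(1+2\sum_t h_t)^{-1}=(2D_0-1)^{-1}\ge (2D_0)^{-1}$. Hence $\cC_T(h)\ge 2^{-Q-1}/D_{q_0}$.

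\textbf{Case $j\ge1$.} We reuse the bounds \eqref{eq:nonterminal-factor-bound} and \eqref{eq:terminal-factor-bound} with $q$ replaced by $j$. They were derived only from \eqref{eq:mass-decomposition} and \eqref{eq:local-factor-identities}, which are valid for every $1\le q\le r$, including $q=1$, where there are no nonterminal factors. From $\sum_i U_i<D_j$ we get $U_{i-1}\le D_j$, so $U_{i-1}\xi_i\le1$. Each nonterminal reciprocal factor then satisfies
\[
\chi_{i-1}^{-1}\le U_{i-1}\xi_i+U_{i-1}\xi_{i+1}+(U_{i-1}\xi_i)(U_{i-1}\xi_{i+1})\le 3.
\]
The terminal factor satisfies
\[
H_j\chi_{j-1}^{-1}\le U_{j-1}(1+2D_j\xi_j)\le 3D_j.
\]
Therefore the chain value is
\[
H_j^{-1}\prod_{i<j}\chi_i\ge \frac{3^{-j}}{D_j}\ge \frac{3^{-Q}2^{-Q}}{D_{q_0}}.
\]

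Combining the two cases, the lemma holds with $c_Q:=6^{-Q}/2$. The only delicate point is the choice of $j$. The condition $a_s\ge D_s$ is not monotone in $s$, so we deliberately take the \emph{largest} such index. This simultaneously guarantees doubling control on the ranks above $j$ and, by monotonicity of the $a_s$, the bound $c_i\ge D_j$ for all selected excesses. Everything else is a direct substitution into the existing factor identities.
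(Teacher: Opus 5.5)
Your proof is correct and follows essentially the same route as the paper's Lemma~\ref{lem:bounded-rank-prefix}. Both use a diffuse--dense dichotomy over the rank prefixes: take the largest dense prefix, or the empty chain if none exists, and then reuse the factor identities \eqref{eq:local-factor-identities} together with the mass decomposition \eqref{eq:mass-decomposition}. The only real difference is your stricter density threshold $a_j\ge D_j$, in place of the paper's $ja_j\ge D_j$. This loses a factor $2^{q_0-j}$ rather than $(q_0+1)/(q+1)$ in the telescoping, but it bounds each nonterminal factor by $3$ individually, so the paper's Jensen step for $\log\varphi$ is not needed; since $Q$ is fixed, both give constants depending only on $Q$.
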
 
The lemma follows by applying Lemma~\ref{lem:bounded-rank-prefix} at rank $q_0$. The proof details are postponed to Appendix~\ref{app:bounded-rank}.

\subsection{Proof of Proposition~\ref{prop:normalized-floor}}
\label{subsec:normalized-lower-bound}

We now combine the growth estimate with the bounded-rank argument to prove Proposition~\ref{prop:normalized-floor}.

Let $K_p\ge 1$ be the constant supplied by Lemma~\ref{lem:boundary-propagation}. We consider two cases, depending on whether there exists a rank $q\in\{Q,\ldots,r\}$ such that $\mu_q<\rho$. 

\medskip\noindent\textbf{Case 1: $\mu_q<\rho$ for some
$q\in\{Q,\ldots,r\}$.}
We define
\[
        k\coloneqq \max\big\{q\in\{Q,\ldots,r\}:\mu_q<\rho\big\}.
\]
By the maximality of $k$, $\mu_q\ge\rho$ for every
$q\in\{k+1,\ldots,r\}$.  Lemma~\ref{lem:boundary-propagation} therefore
gives
\[
        D_k k^{p-1}\le K_pB r^{p-1}.
\]
Combining this with \eqref{eq:small-matching-product} yields
\[
\begin{split}
 \cC_T(h)
 &>\frac{\rho^{-k}}{2D_k}
 =\frac{k^{p-1}\rho^{-k}}{2D_k k^{p-1}}
 \ge\frac{Q^{p-1}\rho^{-Q}}
 {2K_pB r^{p-1}}
 \ge\frac{Q^{p-1}\rho^{-Q}}
 {2K_pB(r+1)^{p-1}}.
\end{split}
\]

\medskip\noindent\textbf{Case 2: $\mu_q\ge\rho$ for every
$q\in\{Q,\ldots,r\}$.}
Set $q_0:=\min\{r,Q\}$. If $r<Q$, then $q_0=r$, and hence $D_{q_0}=D_r=B$. If $r\ge Q$, then $q_0=Q$; moreover, the assumption of this case gives $\mu_q\ge\rho$ for all $q\in\{Q,\ldots,r\}$. Applying Lemma~\ref{lem:boundary-propagation} with $k=Q$ therefore yields \[ D_{q_0}=D_Q \le K_pB\left(\frac rQ\right)^{p-1}. \] Since $K_p\ge1$ and $Q\ge1$, these two subcases are both covered by the uniform bound \[ D_{q_0}\le K_pB(r+1)^{p-1}. \] Lemma~\ref{lem:bounded-rank} then gives \[ \cC_T(h) \ge\frac{c_Q}{D_{q_0}} \ge\frac{c_Q/K_p}{B(r+1)^{p-1}}. \] 

\paragraph{Putting the two cases together.}
Note that the quantities $\rho$, $Q$,
$K_p$, and $c_Q$ depend only on $p$.  We can therefore define
\[
        c_p:=
        \min\left\{
        \frac{Q^{p-1}\rho^{-Q}}{2K_p},
        \frac{c_Q}{K_p}
        \right\}>0.
\]
With this choice of $c_p$, \eqref{eq:normalized-lower-bound} follows from the two cases above.

\subsection{Restoring the original scaling}
\label{sec:restoration}

Thus far, we have established a lower bound on the normalized
quantity $\cC_T(h)$.  By Corollary~\ref{cor:functional-attainment}, this
value is attained by a finite-dimensional smooth convex instance.   It remains only to restore the original smoothness parameter $L$ and initial-distance scale $R$. To this end, we have the following lemma. 

\begin{lemma}
\label{lem:scaling}
Let $F\in\cF_{0,1}(\R^d)$, let $L,R>0$, and define
\[
        f(x):=LR^2F(x/R).
\]
Then $f\in\cF_{0,L}(\R^d)$ and
$\nabla f(x)=LR\nabla F(x/R)$.  If $h_t=L\eta_t$ and
\[
 \bar x_{t+1}=\bar x_t-h_t\nabla F(\bar x_t),
 \qquad x_t:=R\bar x_t,
\]
then $x_{t+1}=x_t-\eta_t\nabla f(x_t)$.  Moreover, every
$\bar x_\star\in\argmin F$ gives $x_\star:=R\bar x_\star\in\argmin f$,
and
\[
 \|x_0-x_\star\|=R\|\bar x_0-\bar x_\star\|,
 \qquad
 f(x_T)-f(x_\star)
 =LR^2\{F(\bar x_T)-F(\bar x_\star)\}.
\]
\end{lemma}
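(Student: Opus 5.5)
The statement is a direct change-of-variables check, and I would verify its claims in the order they are listed.

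\emph{Membership in $\cF_{0,L}(\R^d)$.} Set $f(x)=LR^2F(x/R)$. Convexity and differentiability are inherited from $F$ through composition with the linear map $x\mapsto x/R$. The chain rule gives $\nabla f(x)=LR^2\cdot R^{-1}\nabla F(x/R)=LR\nabla F(x/R)$. For the Lipschitz bound, the $1$-Lipschitz continuity of $\nabla F$ yields
\[
\|\nabla f(x)-\nabla f(y)\|
\le LR\,\|x/R-y/R\|
=L\|x-y\|.
\]
If $\bar x_\star$ minimizes $F$, then $F(\bar x_\star)\le F(x/R)$ for every $x$. Hence $x_\star=R\bar x_\star$ minimizes $f$, and in particular $f$ attains its minimum, so $f\in\cF_{0,L}(\R^d)$.

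\emph{The GD correspondence.} With $x_t=R\bar x_t$ and $h_t=L\eta_t$, the gradient formula gives $\nabla f(x_t)=LR\nabla F(\bar x_t)$. Therefore
\[
x_t-\eta_t\nabla f(x_t)
=R\bar x_t-\eta_t LR\nabla F(\bar x_t)
=R\bigl(\bar x_t-h_t\nabla F(\bar x_t)\bigr)
=R\bar x_{t+1}
=x_{t+1}.
\]
This step is purely algebraic and needs no induction beyond the definition $x_t:=R\bar x_t$, since the map $t\mapsto R\bar x_t$ is given for all $t$.

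\emph{Distance and objective gap.} We have $\|x_0-x_\star\|=R\|\bar x_0-\bar x_\star\|$ immediately. Since $f(x_t)=LR^2F(\bar x_t)$ and $f(x_\star)=LR^2F(\bar x_\star)$, the gap satisfies $f(x_T)-f(x_\star)=LR^2\{F(\bar x_T)-F(\bar x_\star)\}$.

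\emph{Main obstacle.} No real obstacle is expected. The only point requiring care is keeping the powers of $R$ straight in the gradient and Lipschitz computations: the factor $R^2$ loses one power to the chain rule and one more to the inner argument $x/R$, leaving exactly $L$ as the Lipschitz constant.
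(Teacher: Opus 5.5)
Your proposal is correct and follows the paper's proof essentially step by step: the chain rule for the gradient, the Lipschitz estimate $\|\nabla f(x)-\nabla f(y)\|\le LR\|(x-y)/R\|=L\|x-y\|$, the one-line identity $x_t-\eta_t\nabla f(x_t)=R\bar x_{t+1}$, and a direct check of the minimizer, distance, and objective-gap identities. Your explicit verification that $x_\star=R\bar x_\star$ minimizes $f$ (using $LR^2>0$) simply spells out the paper's brief remark that the scaling carries minimizers to $R\argmin F$.
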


\begin{proof}
We first note that positive scaling and composition with $x\mapsto x/R$
preserve convexity and carry minimizers to $R\argmin F$.  The chain rule
then gives the stated gradient formula.  Moreover, $1$-smoothness of $F$
yields
\[
 \|\nabla f(x)-\nabla f(y)\|
 \le LR\|(x-y)/R\|=L\|x-y\|.
\]
Finally, it is readily seen that
\[
 x_t-\eta_t\nabla f(x_t)
 =R\{\bar x_t-h_t\nabla F(\bar x_t)\}
 =R\bar x_{t+1}.
\]
The distance and objective gap identities follow directly from the
definitions.
\end{proof}

\begin{proof}[Proof of Theorem~\ref{thm:main}]
Fix $p\in(p_\star,2)$, an integer $T\ge1$, parameters $L,R>0$, and a stepsize
schedule $\eta\in\R_{\ge0}^T$.  We then set $h_t=L\eta_t$ and choose
$c_p>0$ small enough that Proposition~\ref{prop:normalized-floor}
holds with $2c_p$ in place of its constant.

Since $B\le T+1$ and $r+1\le T+1$,
Proposition~\ref{prop:normalized-floor} gives
\[
 \cC_T(h)
 \ge \frac{2c_p}
          {B(r+1)^{p-1}}
 \ge 2c_p(T+1)^{-p}.
\]
By Corollary~\ref{cor:functional-attainment}, there exist
an integer $\bar d$ with $1\le\bar d\le T+1$, a function
$F\in\cF_{0,1}(\R^{\bar d})$ with $0\in\argmin F$, and a normalized GD
trajectory satisfying
\[
 \bar x_{t+1}=\bar x_t-h_t\nabla F(\bar x_t)
 \quad(0\le t<T),
 \qquad \|\bar x_0\|=1,
\]
for which
\[
 2\{F(\bar x_T)-F(0)\}
 =\cC_T(h)
 \ge 2c_p(T+1)^{-p}.
\]

Apply Lemma~\ref{lem:scaling} with $\bar x_\star=0$ and define
\[
 f(x)=LR^2F(x/R),\qquad x_t=R\bar x_t,
 \qquad x_\star=0.
\]
Then $f\in\cF_{0,L}(\R^{\bar d})$, the sequence $(x_t)_{t=0}^T$ is exactly
the GD trajectory with stepsizes $\eta_t$, and $\|x_0-x_\star\|=R$.  Moreover,
\[
 f(x_T)-f(x_\star)
 \ge c_pLR^2(T+1)^{-p}.
\]
This concludes the proof of  Theorem~\ref{thm:main}.
\end{proof}

\section{Discussion}
\label{sec:discussion}

In this work, we have established a new lower bound for the last-iterate convergence rate
of plain GD with arbitrary predetermined nonnegative
stepsize schedules designed for a prescribed horizon. In particular, our
result has provided the first rigorous evidence that stepsize schedules alone
cannot accelerate plain GD to the optimal $O(T^{-2})$
convergence rate.  The tight worst-case rate achievable by this class of
stepsize-based acceleration methods, however, remains open. We conclude by highlighting several
directions for future work.

\begin{itemize}
\item \textbf{Determining the optimal convergence exponent.}
As mentioned previously, the best-known upper bound for stepsize-based acceleration of GD is
$O(T^{-\log_2(1+\sqrt2)})=O(T^{-1.2715\ldots})$, whereas our lower bound
rules out polynomial exponents exceeding
$p_\star\approx 1.9319$. Closing this gap by matching the upper and lower bounds remains a
central open problem.

\item \textbf{Lower bounds for the best iterate.}
Thus far, Theorem~\ref{thm:main} concerns only the last iterate $x_T$. A
stronger result would establish a lower bound for
\[
\min_{0\le t\le T}\big\{f(x_t)-f(x_\star)\big\}.
\]
Such a result cannot be obtained by applying
Theorem~\ref{thm:main} independently to each prefix, since different
prefixes may require different hard instances. 

\item \textbf{Lower bounds for strongly convex problems.}
The silver stepsize schedule also accelerates gradient descent in the strongly convex
setting \citep{altschuler2025acceleration}. An analogous lower bound for
predetermined nonnegative stepsize schedules would improve our
understanding of the optimal convergence rate in the strongly convex
regime. 

\item \textbf{Allowing negative or adaptive stepsizes.}
Our lower bound relies on both the nonnegativity of the stepsizes and the
fact that the entire schedule is fixed in advance. Whether analogous
lower bounds hold for signed stepsizes or adaptively chosen stepsizes
remains an interesting question for future exploration.
\end{itemize}

\section*{Disclosure on the use of generative AI}

The main proof was developed by GPT-5.6 Sol Pro. The authors provided the model with two inputs: the research objective of proving a lower bound for plain gradient descent showing that its worst-case rate is strictly slower than $1/T^2$, and a high-level resisting-oracle strategy. That strategy was to
first construct an adversarial gradient descent trajectory and then show that
the trajectory could be realized by a smooth convex function. Beyond this
objective and high-level strategy, the authors did not provide any nontrivial
mathematical ingredients used in the final proof.

The proof was not produced in a single interaction. The authors queried GPT-5.6 Sol Pro multiple times, and the argument presented in this paper emerged only after several attempts. GPT-5.6 Sol was also used in later rounds to improve the organization and exposition of the proof. The authors spent substantial effort reviewing, verifying, and revising the generated material to make the proof correct and readable. The authors additionally used Codex to formalize the proof in Lean 4. The formalization is available at \url{https://github.com/jianhaoma/gd-lower-bound-lean}. The authors take full responsibility for the final manuscript and all of its mathematical claims.

\section*{Acknowledgement}
Jianhao thanks Stephen Wright for helpful discussions during the preparation of this manuscript.

\appendix
\section{Proof of the projection formula for Moreau support envelopes}
\label{app:moreau-projection}

In this section, we prove Lemma~\ref{lem:moreau-projection}.

\begin{proof}[Proof of Lemma~\ref{lem:moreau-projection}]
We fix $x\in\R^d$ and set
\[
        x_K\coloneqq\Pi_K(x),
        \qquad z\coloneqq x-x_K.
\]
The variational
characterization of Euclidean projection gives
\[
        \langle z,v-x_K\rangle\le0
        \qquad \text{for every }v\in K.
\]
Thus, $x_K$ maximizes
$\langle\cdot,z\rangle$ over $K$.  Since
$\partial\sigma_K(z)
=\{g\in K:\langle g,z\rangle=\sigma_K(z)\}$, we have
$x_K\in\partial\sigma_K(z)$.  Together with $z-x=-x_K$, this yields
\[
        0\in\partial\sigma_K(z)+z-x.
\]
Hence, $z$ is the unique minimizer of the problem in \eqref{eq:proximal-point}, because
its objective is strongly convex.  It follows that
\[
        \operatorname{prox}_{\sigma_K}(x)
        =z=x-\Pi_K(x),
\]
which proves \eqref{eq:moreau-proximal-formula}.

The support equality $\sigma_K(z)=\langle x_K,z\rangle$ now gives
\begin{align}
 F(x)
 &=\langle x_K,z\rangle+\frac12\|x_K\|^2
  =\langle x_K,x\rangle-\frac12\|x_K\|^2 \notag\\
 &=\frac12\|x\|^2-\frac12\|x-x_K\|^2
  =\frac12\|x\|^2-\frac12\dist(x,K)^2.
 \label{eq:moreau-value-proof}
\end{align}
Completing the square gives the equivalent representation
\begin{equation}
 F(x)
 =\max_{g\in K}
   \left\{\langle g,x\rangle-\frac12\|g\|^2\right\}.
 \label{eq:moreau-dual-representation}
\end{equation}
Completing the square also shows that the maximizer in
\eqref{eq:moreau-dual-representation} is $x_K$, and strong concavity ensures
that this maximizer is unique.  Moreover, the same representation
shows that $F$ is a pointwise maximum of affine functions of $x$ and is
therefore convex.

To identify the gradient, we fix $y\in\R^d$ and set
\[
        y_K\coloneqq\Pi_K(y),
        \qquad \Delta\coloneqq y-x.
\]
Using the maximization formula for $F(\cdot)$, we have
\begin{align*}
    F(y) &\geq \langle x_K, y \rangle - \frac{1}{2} \| x_K\|^2 = F(x) + \langle x_K, \Delta \rangle, \\
    F(x) &\geq \langle y_K, x \rangle - \frac{1}{2} \| y_K\|^2 = F(y) - \langle y_K, \Delta \rangle.
\end{align*}
Combining these inequalities gives
\[
        \langle x_K,\Delta\rangle
        \le F(y)-F(x)
        \le\langle y_K,\Delta\rangle.
\]
Consequently,
\[
 0\le F(y)-F(x)-\langle x_K,\Delta\rangle
 \le\langle y_K-x_K,\Delta\rangle
 \le\|y_K-x_K\|\,\|\Delta\|
 \le\|\Delta\|^2,
\]
where the last inequality uses the nonexpansiveness of Euclidean
projection.  Dividing by $\|\Delta\|$ and letting $y\to x$ shows that $F$ is
Fr\'echet differentiable at $x$ with
\[
        \nabla F(x)=x_K=\Pi_K(x).
\]
Because $x$ was arbitrary, the nonexpansiveness of $\Pi_K$ also shows that
$\nabla F$ is $1$-Lipschitz.  Together with
\eqref{eq:moreau-value-proof}, this proves
\eqref{eq:moreau-projection-formula} and completes the proof.

\end{proof}

\section{Technical proofs for the schedule analysis}
\label{app:schedule-proofs}

We collect here four estimates used in the schedule analysis: the
equalization bound under a mass constraint, the recursions as the rank
cutoff changes, the one-step Lyapunov estimate, and the bounded-rank
argument.

\subsection{Proof of Lemma~\ref{lem:budget-equalization}}
\label{app:budget-equalization}

Set $\widehat u_i \coloneqq u_i/\bar u$ and $\beta_i \coloneqq \bar u B_i/A_i$.  For
$x>0$ and $\beta\ge0$,
\[
 \frac{x(1+\beta x)}{1+2\beta}
 =\frac{x}{1+2\beta}
  +\frac{2\beta}{1+2\beta}\frac{x^2}{2}
 \le e^{x-1}.
\]
Indeed, $x\le e^{x-1}$, and $x^2/2\le e^{x-1}$ because
$g(x):=x^2e^{1-x}/2$ satisfies $g'(x)=g(x)(2/x-1)$ and hence attains its maximum at $x=2$, with
$g(2)=2/e<1$.  The ratio of the left-hand side of
\eqref{eq:budget-product} to its right-hand side is therefore
\[
 \frac{
   (\prod_{i=1}^q u_i)\prod_{i=1}^{q-1}(A_i+B_iu_i)}
  {\bar u^q\prod_{i=1}^{q-1}(A_i+2\bar u B_i)}
 =\widehat u_q\prod_{i=1}^{q-1}
 \frac{\widehat u_i(1+\beta_i\widehat u_i)}{1+2\beta_i}
 \le \exp\!\left(\sum_{i=1}^q\widehat u_i-q\right)
 \le1,
\]
since $\sum_i\widehat u_i=\bar u^{-1}\sum_i u_i\le D/\bar u=q$.  The same
conclusion holds when $q=1$, in which case the product over $i<q$ is empty.

\subsection{Proof of Lemma~\ref{lem:adjacent-rank-relations}}
\label{app:adjacent-rank-relations}

The identity $D_{q-1}=D_q+a_q$ and the definition of $\nu_q$
give \eqref{eq:one-rank-mass-ratio}.  Multiplying these adjacent ratios and
using $D_r=B$ lead to \eqref{eq:mass-product}, with an empty product when
$k=r$.

Next, we divide $a_q\ge a_{q+1}$ by $D_q$ and use
$D_q=D_{q+1}\{1+\nu_{q+1}/(q+1)\}$ to obtain
\[
 \frac{\nu_q}{q}
 =\frac{a_q}{D_q}
 \ge\frac{a_{q+1}}{D_q}
 =\frac{\nu_{q+1}/(q+1)}
        {1+\nu_{q+1}/(q+1)}.
\]
This is exactly \eqref{eq:mass-increment-transition}.  Rearranging terms gives
\[
        \nu_{q+1}(q-\nu_q)\le(q+1)\nu_q.
\]
When $q>\nu_q$, division by the positive denominator proves
\eqref{eq:mass-increment-bound}.

Finally, the definition of $\zeta_q$ gives
\[
 \sum_{s=1}^{q+1}\frac1{a_s}
 =\frac{q^2\zeta_q}{D_q}
  +\frac{q+1}{\nu_{q+1}D_{q+1}}.
\]
Therefore, we obtain
\[
\begin{split}
 \zeta_{q+1}
 &=\frac{D_{q+1}}{(q+1)^2}
   \left(\frac{q^2\zeta_q}{D_q}
   +\frac{q+1}{\nu_{q+1}D_{q+1}}\right)\\
 &=\frac{q^2\zeta_q}{(q+1)^2}\frac{D_{q+1}}{D_q}
   +\frac1{\nu_{q+1}(q+1)}\\
 &=\frac{q^2\zeta_q}{(q+1)(q+1+\nu_{q+1})}
   +\frac1{\nu_{q+1}(q+1)},
\end{split}
\]
which establishes \eqref{eq:zeta-recursion}.

\subsection{Proof of Lemma~\ref{lem:one-step-lyapunov}}
\label{app:one-step-lyapunov-proof}

In this proof, we use the recursions and potential from
Subsection~\ref{subsec:mass-growth}. 
We shall also use the notation
\[
        A(v):=\frac{v}{\vartheta(v+p+1)},
        \qquad
        \mathcal L_q=A(\nu_q)(\zeta_q-\vartheta), 
\]
and choose $C_p\ge1$ so that
\[
 C_p\ge
 \frac{\vartheta^{-1}
       \{1+\vartheta^{-1}(\vartheta^{-1}+2)\}}{p+1}.
\]

\medskip\noindent\textbf{Coefficient comparison.}
We abbreviate $v:=\nu_n$ and $u:=\nu_{n-1}$, and we define
\[
        \omega_n(v):=\frac{(n-1)^2}{n(n+v)}.
\]
Because $n-1>\vartheta^{-1}\ge u$, the denominator in the
assumed upper bound on $v$ is positive.  We now set
\[
        u_{\min}:=\frac{(n-1)v}{n+v}.
\]
Rearranging that bound gives
\begin{equation}
        u\ge u_{\min}.
 \label{eq:reset-lower}
\end{equation}
The derivative of $A$ and the ratio $A(x)/x$ satisfy
\[
 A'(x)=\frac{p+1}{\vartheta(x+p+1)^2}>0,
 \qquad
 \frac{A(x)}{x}=\frac1{\vartheta(x+p+1)}.
\]
Thus, $A$ is increasing on $(0,\infty)$, whereas $A(x)/x$ is
decreasing there.  We also have $0<u_{\min}<v$, so
\begin{equation}
\begin{aligned}
 A(u)
 &\ge A(u_{\min})
 \ge \frac{u_{\min}}{v}A(v)
 =\frac{n-1}{n+v}A(v)\ge \frac{(n-1)^2}{n(n+v)}A(v)
 =\omega_n(v)A(v).
\end{aligned}
 \label{eq:lyapunov-coefficient}
\end{equation}

\medskip\noindent\textbf{Drift calculation.}
We define the drift term
\[
 \Delta_n(v):=\frac1{nv}
  -\frac{\vartheta\{n(v+2)-1\}}{n(n+v)}.
\]
Subtracting $\vartheta$ from the recursion gives
\[
 \zeta_n-\vartheta
 =\omega_n(v)(\zeta_{n-1}-\vartheta)+\Delta_n(v).
\]
Here we use the identity
\[
 1-\omega_n(v)
 =\frac{n(n+v)-(n-1)^2}{n(n+v)}
 =\frac{n(v+2)-1}{n(n+v)}.
\]
Using \eqref{eq:parameter-choice}, direct simplification gives
\begin{equation}
 A(v)\Delta_n(v)
 =\frac1n\left[
   p-1-v+
   \frac{v\{1+v(v+2)\}}{(n+v)(v+p+1)}
 \right].
 \label{eq:lyapunov-boundary}
\end{equation}
To verify this identity, we use
$\vartheta^{-1}=(p-1)(p+1)$ and $(p+1)-(p-1)=2$ to obtain
\[
\begin{split}
 nA(v)\Delta_n(v)-(p-1-v)
 &=\frac{v}{v+p+1}
 \left[(v+2)-\frac{n(v+2)-1}{n+v}\right]
 =\frac{v\{1+v(v+2)\}}{(n+v)(v+p+1)},
\end{split}
\]
which is precisely \eqref{eq:lyapunov-boundary}.
Because $\zeta_{n-1}-\vartheta\ge0$,
\eqref{eq:lyapunov-coefficient} gives
\[
\begin{split}
 \mathcal L_n
 &=A(v)\omega_n(v)(\zeta_{n-1}-\vartheta)+A(v)\Delta_n(v)\le A(u)(\zeta_{n-1}-\vartheta)+A(v)\Delta_n(v)=\mathcal L_{n-1}+A(v)\Delta_n(v).
\end{split}
\]
Moreover, $0<v\le\vartheta^{-1}$ implies
\[
 0\le
 \frac{v\{1+v(v+2)\}}{(n+v)(v+p+1)}
 \le\frac1n
 \frac{\vartheta^{-1}
       \{1+\vartheta^{-1}(\vartheta^{-1}+2)\}}{p+1}
 \le\frac{C_p}{n}.
\]
Here we use $n/(n+v)\le1$, $v/(v+p+1)\le
\vartheta^{-1}/(p+1)$, and
$1+v(v+2)\le1+\vartheta^{-1}(\vartheta^{-1}+2)$.
Combining this estimate with \eqref{eq:lyapunov-boundary} proves
\eqref{eq:one-step-lyapunov}.

\subsection{The bounded-rank case}
\label{app:bounded-rank}

The next lemma provides the estimate used in the proof of
Lemma~\ref{lem:bounded-rank}.

\begin{lemma}[Bounded-rank prefix bound]
\label{lem:bounded-rank-prefix}
Let $0\le q_0\le r$, and regard $D_{q_0}$ as the residual schedule mass after the
top $q_0$ excesses have been selected.  When $q_0\ge1$, consider the
chronological chains whose selected sets are the global rank prefixes
\[
 \varnothing,\quad
 \{\tau_1\},\quad
 \{\tau_1,\tau_2\},\quad
 \ldots,\quad
 \{\tau_1,\ldots,\tau_{q_0}\}.
\]
Each nonempty prefix is restored to chronological order before applying the
geometric construction, with equality in each local bound.  The value of a
chain with $m$ selected steps is
 \[
        \frac1{H_m}\prod_{i=0}^{m-1}\chi_i.
 \]
This is the chain's contribution to $\cC_T(h)$ in
\eqref{eq:lower-bound-functional}.  When $q_0\ge1$, at least one of these chains
has value at least
\begin{equation}
        \frac1{4D_{q_0}\,q_0\,8^{q_0-1}}.
 \label{eq:bounded-rank-prefix}
\end{equation}
For $q_0=0$, the empty chain has value at least $(2D_0)^{-1}$.
\end{lemma}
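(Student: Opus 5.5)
The plan is to pick a single prefix length $j\in\{0,\dots,q_0\}$ at which every selected excess is at least as large as the remaining mass, and to show directly that this prefix chain has a good value. Define
\[
 j^*:=\max\bigl(\{0\}\cup\{\,j\in\{1,\dots,q_0\}: a_j\ge D_j\,\}\bigr).
\]
For every $j\in\{j^*+1,\dots,q_0\}$ we have $a_j<D_j$, hence $D_{j-1}=D_j+a_j<2D_j$. Iterating gives
\[
 D_{j^*}\le 2^{\,q_0-j^*}D_{q_0}.
\]
It therefore suffices to bound the value of the prefix chain $\{\tau_1,\dots,\tau_{j^*}\}$ from below by a quantity of the form $1/(C\,D_{j^*}\,c^{\,j^*})$.

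\textbf{The case $j^*=0$.} Here we use the empty chain. Its value is $1/H_0=1/(1+2\sum_t h_t)=1/(2D_0-1)\ge 1/(2D_0)$. This also settles the statement for $q_0=0$.

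\textbf{The case $j:=j^*\ge1$.} We restore the top $j$ excesses to chronological order, giving $c_1,\dots,c_j$, and take $\gamma_i^2=\chi_i$. Every selected excess satisfies $c_i\ge a_j\ge D_j$. The mass decomposition \eqref{eq:mass-decomposition}, applied with $q=j$, gives three facts:
\begin{itemize}
 \item $U_i\le \sum_i U_i\le D_j-1<D_j$;
 \item $1\le H_j=2V-1\le 2D_j$;
 \item $H_i=U_i+c_{i+1}\ge D_j$ for every $i<j$.
\end{itemize}
We then bound each factor $\chi_i=c_{i+1}H_{i+1}/\{U_i(U_i+c_{i+1}+H_{i+1})\}$ from below.

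For a nonterminal index $i\le j-2$, both $c_{i+1}$ and $H_{i+1}$ are at least $D_j>U_i$. Hence
\[
 U_i+c_{i+1}+H_{i+1}\le 3\max\{c_{i+1},H_{i+1}\},
\]
and so $\chi_i\ge \min\{c_{i+1},H_{i+1}\}/(3U_i)\ge D_j/(3U_i)\ge 1/3$.

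For the last factor $i=j-1$, we have $c_j\ge D_j\ge \max\{U_{j-1},H_j/2\}$. This gives $U_{j-1}+c_j+H_j\le 4c_j$, and therefore $\chi_{j-1}\ge H_j/(4U_{j-1})\ge H_j/(4D_j)$.

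Multiplying these bounds, the chain value satisfies
\[
 \frac1{H_j}\prod_{i=0}^{j-1}\chi_i\ \ge\ \frac{1}{4D_j\,3^{\,j-1}}.
\]

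\textbf{Combining the two cases.} Using $D_{j^*}\le 2^{q_0-j^*}D_{q_0}$, the chosen chain has value at least
\[
 \frac{1}{4D_{q_0}\,2^{q_0-j^*}\,3^{\,j^*-1}}
\]
when $j^*\ge1$, and at least $1/(2^{q_0+1}D_{q_0})$ when $j^*=0$. For $q_0\ge1$ both expressions dominate $1/(4D_{q_0}q_0 8^{q_0-1})$, because $2^{q_0-j}3^{j-1}\le 2\cdot 8^{q_0-1}$. The remaining factor is absorbed by $q_0\ge1$ up to an elementary check of small cases, adjusting the constant bookkeeping if needed. Lemma~\ref{lem:bounded-rank} then follows with $c_Q$ of order $(Q\,8^{Q})^{-1}$.

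\textbf{Main obstacle.} The step needing the most care is the lower bound on the terminal factor $\chi_{j-1}$. Unlike the interior blocks, $H_j$ may be as small as $1$, so one must use that the factor $H_j$ in the numerator of $\chi_{j-1}$ cancels the prefactor $1/H_j$. This cancellation works only because $c_j$ dominates both $U_{j-1}$ and $H_j/2$; if that fails, the argument breaks. A secondary point is to confirm that the prefix chosen by $j^*$ is genuinely admissible, i.e.\ consists of indices in $\mathcal I_+$ and is listed in chronological order. This is immediate from the definition of $\cC_T(h)$.
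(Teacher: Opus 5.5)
Your proof is correct. It uses the same overall dichotomy as the paper: take the largest ``dense'' rank prefix, and fall back on the empty chain if there is none. The difference is the density threshold, and that changes how the dense case is handled.

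\textbf{The paper's route.} The paper calls a prefix dense when $j a_j \ge D_j$. This keeps the mass growth across the diffuse ranks polynomial, $D_q \le D_{q_0}(q_0+1)/(q+1)$. But it only yields $a_{\min}\ge D_q/q$, so the ratios $U_i/a_{\min}$ are controlled only on average, via $\sum_i U_i/a_{\min}\le q$. The paper therefore needs the bound $\chi_{i-1}^{-1}\le\varphi(U_{i-1}/a_{\min})$ with $\varphi(x)=2x+x^2$, log-concavity of $\varphi$, and Jensen to reach $8^{q-1}$. It also needs a separate terminal estimate $H_q\chi_{q-1}^{-1}\le 3qD_q$.

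\textbf{Your route.} Your threshold $a_j\ge D_j$ costs exponential mass growth $2^{q_0-j^*}$ over the diffuse ranks. In return, every selected excess dominates the entire residual mass, so each $U_i$ and each of $H_{i+1}$, $H_j/2$ is controlled by the selected $c$'s. Each factor can then be bounded individually, with no averaging step: $\chi_i\ge 1/3$ in the interior and $\chi_{j-1}\ge H_j/(4D_j)$ at the end. Since only bounded $q_0\le Q$ is used and the target is already exponential in $q_0$, this trade costs nothing. Your argument is more elementary.

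\textbf{Final bookkeeping.} Your last step is looser than it needs to be, and the hedge about small cases and adjusting constants is unnecessary. The quantity $2^{q_0-j}3^{j-1}$ is increasing in $j$, hence at most $3^{q_0-1}$. In the empty-chain case, $2^{q_0+1}\le 4\cdot 3^{q_0-1}$. So in every case the chosen chain has value at least $1/(4D_{q_0}3^{q_0-1})$, which already dominates $1/(4D_{q_0}\,q_0\, 8^{q_0-1})$. The stated constant therefore holds exactly, and you in fact get a slightly better one. Your intermediate bound $2\cdot 8^{q_0-1}$ would not suffice at $q_0=1$ as written, which is presumably why the hedge appeared; the monotonicity observation removes it.
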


\begin{proof}[Proof of Lemma~\ref{lem:bounded-rank-prefix}]
We use a diffuse--dense dichotomy.  Suppose first that, for every prefix, the
product of its smallest excess and its length is smaller than the residual schedule mass behind it.  Telescoping then shows that the top $q_0$ excesses increase
the residual schedule mass $D_{q_0}$ by at most a factor $q_0+1$, and the empty chain
gives the desired bound.  If a dense prefix exists instead, the largest such
prefix has both a controlled residual schedule mass and a large smallest selected
excess.  These two facts control the local factors in the construction.

If $q_0=0$, the empty-chain value is
$(2D_0-1)^{-1}\ge(2D_0)^{-1}$.  For the rest of the proof, we assume that
$q_0\ge1$.
For $0\le j\le q_0$, the residual schedule masses satisfy
\[
 D_j=D_{q_0}+\sum_{s=j+1}^{q_0}a_s,
 \qquad
 D_{j-1}=D_j+a_j.
\]
We call the rank prefix of length $j\in\{1,\ldots,q_0\}$ \emph{dense} if
$j a_j\ge D_j$.

\medskip\noindent\textbf{Case 1: all prefixes are diffuse.}
In this case, $j a_j<D_j$ for every $1\le j\le q_0$.  The tail recursion
telescopes to
\[
 \frac{D_0}{D_{q_0}}
 =\prod_{j=1}^{q_0}
   \left(1+\frac{a_j}{D_j}\right)
 <\prod_{j=1}^{q_0}\left(1+\frac1j\right)
 =q_0+1.
\]
Consequently, the empty-chain value satisfies
\[
 \frac1{2D_0-1}
 \ge\frac1{2D_0}
 >\frac1{2D_{q_0}(q_0+1)}
 \ge\frac1{4D_{q_0}\,q_0\,8^{q_0-1}}.
\]
The last inequality follows from
$2(q_0+1)\le4q_0\,8^{q_0-1}$.

\medskip\noindent\textbf{Case 2: a dense prefix exists.}
We now take $q$ to be the largest index for which the rank prefix is dense. Every prefix of length $j>q$ is diffuse, so
telescoping the residual schedule mass beyond $q$ gives
\[
 \frac{D_q}{D_{q_0}}
 =\prod_{j=q+1}^{q_0}
   \left(1+\frac{a_j}{D_j}\right)
 \le\prod_{j=q+1}^{q_0}\left(1+\frac1j\right)
 =\frac{q_0+1}{q+1}.
\]
Together with the density of the prefix of length $q$, this gives
\begin{equation}
 D_q\le D_{q_0}\frac{q_0+1}{q+1},
 \qquad
 a_q\ge\frac{D_q}{q}.
 \label{eq:bounded-rank-dense-consequences}
\end{equation}
The first inequality controls the residual schedule mass after the prefix, while the
second controls every $\chi_i$ through the smallest selected excess.

We select the labeled prefix $\{\tau_1,\ldots,\tau_q\}$ and restore its
chronological order.  Specifically, we define $\pi$ as the unique permutation
of $\{1,\ldots,q\}$ such that
\[
 \tau_{\pi(1)}<\cdots<\tau_{\pi(q)},
\]
and we then set $c_i:=a_{\pi(i)}=y_{\tau_{\pi(i)}}$ and use the block notation
with $t_i:=\tau_{\pi(i)}$ for $1\le i\le q$.  We also set
\[
 V:=1+\sum_{t=\tau_{\pi(q)}+1}^{T-1}h_t,
 \qquad
 H_q=2V-1.
\]
Because the gaps partition the time indices outside the selected prefix,
\begin{equation}
\begin{split}
 \sum_{i=0}^{q-1}U_i+V
 &=q+1+
   \sum_{t\notin\{\tau_1,\ldots,\tau_q\}}h_t=B+\sum_{s=q+1}^{r}a_s
 =D_q.
\end{split}
 \label{eq:bounded-rank-mass-decomposition}
\end{equation}
For this chain, we set $\gamma_i^2=\chi_i$ for $0\le i<q$ and define
$a_{\min}:=a_q$.

\medskip\noindent\textbf{The nonterminal factors.}
We define $\varphi(x):=2x+x^2$.  For $1\le i<q$,
$c_i,c_{i+1}\ge a_{\min}$ and
$H_i=U_i+c_{i+1}\ge a_{\min}$.  Hence, for $1\le i<q$, the first identity in
\eqref{eq:local-factor-identities} gives
\begin{equation}
        \chi_{i-1}^{-1}
        \le\varphi(U_{i-1}/a_{\min}).
 \label{eq:bounded-rank-internal}
\end{equation}
Indeed, the three terms in that identity are bounded by
$U_{i-1}/a_{\min}$, $U_{i-1}/a_{\min}$, and
$U_{i-1}^2/a_{\min}^2$.  Moreover,
\[
        \sum_{i=0}^{q-2}\frac{U_i}{a_{\min}}
        \le\frac{D_q}{a_{\min}}\le q
\]
by \eqref{eq:bounded-rank-mass-decomposition} and
\eqref{eq:bounded-rank-dense-consequences}.  Since
\[
        \frac{d^2}{dx^2}\log\varphi(x)
        =-\frac1{x^2}-\frac1{(x+2)^2}<0,
\]
$\log\varphi$ is concave on $(0,\infty)$.
Moreover, $\varphi'(x)=2+2x>0$, so $\varphi$ is increasing.  For $q\ge2$,
Jensen's inequality and the preceding mass bound give
\begin{equation}
\begin{split}
 \prod_{i=0}^{q-2}\chi_i^{-1}
 &\le
 \varphi\!\left(
   \frac1{q-1}\sum_{i=0}^{q-2}\frac{U_i}{a_{\min}}
 \right)^{q-1}\le\varphi\!\left(\frac{q}{q-1}\right)^{q-1}
 \le8^{q-1}.
\end{split}
 \label{eq:bounded-rank-internal-product}
\end{equation}
The last inequality uses $q/(q-1)\le2$ and $\varphi(2)=8$.  When $q=1$,
the internal product is empty and the same bound holds.  Thus
\begin{equation}
        \prod_{i=0}^{q-2}\chi_i^{-1}\le8^{q-1}
        \qquad(q\ge1).
 \label{eq:bounded-rank-internal-summary}
\end{equation}

\medskip\noindent\textbf{The terminal factor.}
The mass decomposition \eqref{eq:bounded-rank-mass-decomposition} also gives,
with the sum interpreted as empty when $q=1$,
\[
 2D_q-(U_{q-1}+H_q)
 =2\sum_{i=0}^{q-2}U_i+U_{q-1}+1\ge0.
\]
Thus $U_{q-1}\le D_q$ and
$U_{q-1}+H_q\le2D_q$.  The second identity in
\eqref{eq:local-factor-identities}, followed by
\eqref{eq:bounded-rank-dense-consequences}, now yields
\begin{equation}
\begin{split}
 H_q\chi_{q-1}^{-1}
 &\le D_q
       \left(1+\frac{2D_q}{a_{\min}}\right)
 \le3qD_q
 \le3D_{q_0}\,q_0.
\end{split}
 \label{eq:bounded-rank-terminal}
\end{equation}
Here the last inequality uses
$q(q_0+1)/(q+1)\le q_0$.

The selected prefix therefore has value
\[
 \frac1{H_q}\prod_{i=0}^{q-1}\chi_i
 =\frac1{(H_q\chi_{q-1}^{-1})
          \prod_{i=0}^{q-2}\chi_i^{-1}}
 \ge\frac1{3D_{q_0}\,q_0\,8^{q-1}}
 \ge\frac1{4D_{q_0}\,q_0\,8^{q_0-1}}.
\]
We state the result with $4$ and $q_0$, rather than $3$ and the selected
rank $q$, to obtain a simple bound uniform over all possible prefixes.
This proves \eqref{eq:bounded-rank-prefix}.
\end{proof}

\begin{proof}[Proof of Lemma~\ref{lem:bounded-rank}]
Lemma~\ref{lem:bounded-rank-prefix} gives
$\cC_T(h)\ge(2D_0)^{-1}$ when $q_0=0$, and for
$1\le q_0\le\min\{Q,r\}$ it gives
\[
 \cC_T(h)
 \ge\frac1{4D_{q_0}\,q_0\,8^{q_0-1}}.
\]
We can therefore set
\[
        c_Q:=\frac1{4Q\,8^{Q-1}}.
\]
This choice works simultaneously for every $0\le q_0\le\min\{Q,r\}$.
Indeed, $c_Q\le1/2$, and
$q_0 8^{q_0-1}\le Q8^{Q-1}$ for $1\le q_0\le Q$.
\end{proof}

\bibliographystyle{plainnat}
\bibliography{references}

\end{document}